\crefname{exmp}{Example}{Examples}
\crefname{prop}{Proposition}{Propositions}
\newtheoremstyle{mytheoremstyle} % name
    {5pt}                    % Space above
    {5pt}                    % Space below
    {\itshape}                   % Body font
    {\parindent}                           % Indent amount (empty = no indent, \parindent = para indent)
    {\bf}                   % Theorem head font
    {.}                          % Punctuation after theorem head
    {.5em}                       % Space after thm head: " " = normal interword space; \newline = linebreak
    {}  % Theorem head spec (can be left empty, meaning ‘normal’)
\theoremstyle{mytheoremstyle}
\newtheorem{theorem}{Theorem}[section]
\newtheorem{lemm}[theorem]{Lemma}
\newtheorem{prop}[theorem]{Proposition}
\newtheoremstyle{mytdefintionstyle} % name
    {5pt}                    % Space above
    {5pt}                    % Space below
    {\rm}                   % Body font
    {\parindent}                           % Indent amount
    {\bf}                   % Theorem head font
    {.}                          % Punctuation after theorem head
    {.5em}                       % Space after theorem head
    {}  % Theorem head spec (can be left empty, meaning ‘normal’)
\theoremstyle{remark}
\newtheorem{rmrk}[theorem]{Remark}
\theoremstyle{mytdefintionstyle}
\newtheorem{defn}[theorem]{Definition}
\newtheorem{exmp}[theorem]{Example}
\newtheoremstyle{exmp_contd}
    {5pt}                    % Space above
    {5pt}                    % Space below
    {\rm}                   % Body font
    {\parindent}                           % Indent amount
    {\bf}                   % Theorem head font
    {.}                          % Punctuation after theorem head
    {.5em}                       % Space after theorem head
    {\thmname{#1}\ \thmnumber{ #2}\thmnote{#3}\ (continued)}  % Theorem head spec (can be left empty, meaning ‘normal’)
\theoremstyle{exmp_contd}
\DeclareMathOperator{\codim}{codim}
\newcommand\A{\mathcal{A}}
\newcommand\C{\mathbb{C}}
\newcommand\B{\mathcal{B}}
\newcommand\F{\mathbb{F}}
\newcommand\bF{\mathbb{F}}
\newcommand\I{\mathcal{I}}
\renewcommand\P{\mathcal{P}}
\newcommand{\Q}{\mathbb{Q}}
\newcommand\R{\mathbb{R}}
\newcommand{\Z}{\mathbb{Z}}
\newcommand\N{\mathbb{N}}
\renewcommand\phi{\varphi}
\definecolor{darkgray}{rgb}{0.3,0.3,0.3}
\definecolor{LightGray}{gray}{0.9}
\newcommand{\topstrut}[1][1.2ex]{\setlength\bigstrutjot{#1}{\bigstrut[t]}}
\newcommand{\botstrut}[1][0.9ex]{\setlength\bigstrutjot{#1}{\bigstrut[b]}}
\definecolor{darkgreen}{rgb}{0.008,0.617,0.067}
\definecolor{brown}{rgb}{0.6,0.4,0.2}
\newif\ifjournalversion
\author{Lukas K\"uhne}
\address{Einstein Institute of Mathematics, The Hebrew University of Jerusalem, Giv’at Ram, Jerusalem, 91904, Israel}
\address{Max Planck Institute for Mathematics in the Sciences, Inselstr. 22, 04103, Leipzig, Germany}
\email{\href{mailto:Lukas Kuehne<lukas.kuhne@mis.mpg.de>}{lukas.kuhne@mis.mpg.de}}
\begin{document}

\title{The universality of the resonance arrangement and its Betti numbers}
\begin{abstract}

The resonance arrangement $\A_n$ is the arrangement of hyperplanes which has all non-zero $0/1$-vectors in $\R^n$ as normal vectors.
It is the adjoint of the Braid arrangement and is also called the all-subsets arrangement.
The first result of this article shows that any rational hyperplane arrangement is the minor of some large enough resonance arrangement.

Its chambers appear as regions of polynomiality in algebraic geometry, as generalized retarded functions in mathematical physics and as maximal unbalanced families that have applications in economics.
One way to compute the number of chambers of any real arrangement is through the coefficients of its characteristic polynomial which are called Betti numbers.
We show that the Betti numbers of the resonance arrangement are determined by a fixed combination of Stirling numbers of the second kind.
Lastly, we develop exact formulas for the first two non-trivial Betti numbers of the resonance arrangement.

\end{abstract}

\thanks{L.K. was supported by ERC StG 716424 - CASe, a Minerva fellowship of the Max-Planck-Society and the Studienstiftung des deutschen Volkes.}

\keywords{%
matroids, resonance arrangement, all-subsets arrangement, maximal unbalanced families, Betti numbers.
}
\subjclass[2010]{%
	05B35, 52B40, 14N20, 52C35.
}
\maketitle

% \tableofcontents

\section{Introduction}

\subsection{The Resonance Arrangement}
The main object considered in this article is the resonance arrangement:
\begin{defn}
	For a fixed integer $n\ge1$ we define the hyperplane arrangement $\A_n$ as the \emph{resonance arrangement} in $\R^n$ by setting
	$
	\A_n:=\{H_I\mid \emptyset \neq I\subseteq [n]\},
	$
	where the hyperplanes~$H_I$ are defined by
	$
	H_I := \left\{ \sum_{i\in I}x_i = 0 \right\}.
	$
\end{defn}

\begin{figure}[h]
	\includegraphics[width=.3\linewidth]{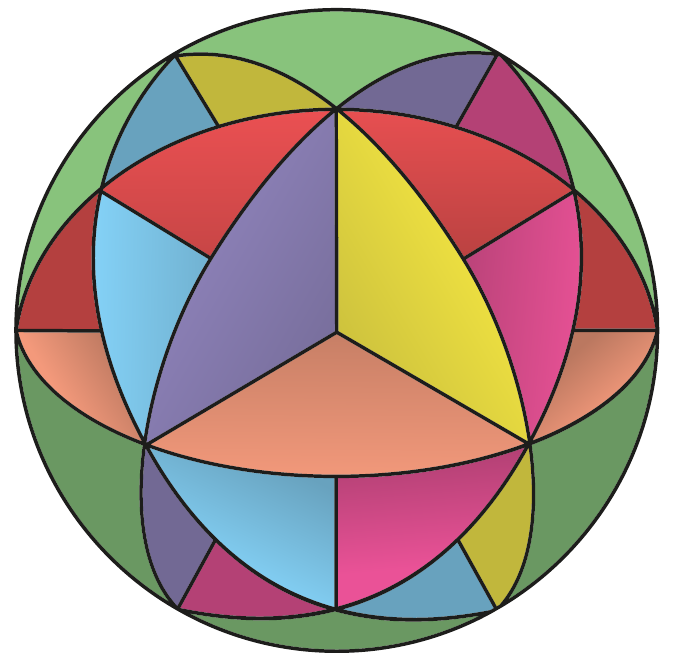}
	\caption{The resonance arrangement $\A_3$ projected onto the hyperplane $H_{\{1,2,3\}}$.
	There are $16$ chambers visible and another $16$ antipodal chambers hidden. Thus, $\A_3$ has $32$ chambers in total.}
	\label{fig:A3}
\end{figure}

The term resonance arrangement was coined by Shadrin, Shapiro, and Vainshtein in their study of double Hurwitz numbers stemming from algebraic geometry~\cite{SSV08}.
Billera, Billey, Rhoades, and Tewari proved that the product of the defining linear equations of~$\A_n$ is Schur positive via a so-called Chern phletysm from representation theory~\cite{BBT18,BRT19}.
Recently, Gutekunst, M{\'e}sz{\'a}ros, and Petersen established a connection between the resonance arrangement and the type $A$ root polytope~\cite{GMP19}.

The arrangement $\A_n$ is also the \emph{adjoint of the braid arrangement}~\cite[Section 6.3.12]{AM17}.
It was studied under this name by Liu, Norledge, and Ocneanu in its relation to mathematical physics~\cite{LNO19}.
The relevance of the resonance arrangement in physics was also demonstrated by Early in his work on so-called \emph{plates}, cf.~\cite{Ear17}.

In earlier work, the arrangement $\A_n$ was called \emph{(restricted) all-subsets arrangement} by Kamiya, Takemura, and Terao who established its relevance for applications in psychometrics and economics~\cite{KTT11,KTT12}.

A first contribution of this article is a universality result of the resonance arrangement for rational hyperplane arrangements:

\begin{theorem}\label{thm:universality}
	Let $\B$ be any hyperplane arrangement defined over $\Q$.
	Then $\B$ is a minor of $\A_n$ for some large enough $n$, that is $\B$ arises from $\A_n$ after a suitable sequence of restriction and contraction steps.
	Equivalently, any matroid that is representable over $\Q$ is a minor of the matroid underlying $\A_n$ for some large enough $n$.
\end{theorem}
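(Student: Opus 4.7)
The plan is to realize $\B$ by a single restriction of a suitably large $\A_n$ to a flat, followed by deletions. Since rescaling normal vectors does not change an arrangement (or its matroid), we may clear denominators and assume the $m$ hyperplanes of $\B$ are given by integer normal vectors $v_1,\ldots,v_m\in\Z^d$ with entries bounded by some $N\ge 1$ in absolute value.

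Set $n := 2Nd$ and label the coordinates of $\R^n$ by symbols $x_{j,k}^+$ and $x_{j,k}^-$ for $j\in[d]$ and $k\in[N]$. Consider
$$V \;:=\; \bigcap_{\substack{j\in[d]\\ k,l\in[N]}} H_{\{x_{j,k}^+,\,x_{j,l}^-\}} \;\subset\; \R^n.$$
Each $H_{\{x_{j,k}^+,\,x_{j,l}^-\}}$ is a hyperplane of $\A_n$, so $V$ is genuinely a flat of $\A_n$. Its defining equations $x_{j,k}^+ + x_{j,l}^- = 0$ collapse each block: they force $x_{j,k}^+$ to be independent of $k$ (call this common value $t_j$) and $x_{j,l}^- = -t_j$ for every $l$. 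Hence $V$ is $d$-dimensional with coordinates $(t_1,\ldots,t_d)$.

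For each $i\in[m]$, choose $I_i\subseteq[n]$ that contains exactly $\max(v_{i,j},0)$ of the symbols $\{x_{j,k}^+\}_k$ and exactly $\max(-v_{i,j},0)$ of the symbols $\{x_{j,l}^-\}_l$ for each $j$; this is possible since $|v_{i,j}|\le N$. The restriction of $H_{I_i}$ to $V$ then equals
$$\sum_{j=1}^d \bigl(\max(v_{i,j},0)-\max(-v_{i,j},0)\bigr)\,t_j \;=\; \sum_{j=1}^d v_{i,j}\,t_j \;=\; 0,$$
which is exactly the $i$-th hyperplane of $\B$ under the identification $V\cong\R^d$. After restricting $\A_n$ to $V$ and then deleting all superfluous hyperplanes, we obtain $\B$. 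Since arrangement restriction corresponds to matroid contraction up to parallel elements (which the subsequent deletions absorb) and every $\Q$-representable matroid is the matroid of some rational arrangement, the equivalent matroid statement follows as well.

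The main conceptual obstacle is that every normal of $\A_n$ has entries in $\{0,1\}$, whereas $\B$ may involve negative integer coefficients. The construction circumvents this through the relations $x_{j,l}^- = -x_{j,k}^+$ enforced on $V$: signed pairs of coordinates manufacture negative coefficients from purely $\{0,1\}$ combinations. The remaining verifications are routine ($V$ has the expected dimension, each $H_{I_i}$ is not among the hyperplanes defining $V$ because $v_i\neq 0$, and distinct $v_i$ give distinct hyperplanes on $V$ because $\B$ is an arrangement).
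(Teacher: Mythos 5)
Your construction is correct, and it reaches the theorem by a genuinely different route than the paper. The paper works column by column on an integer matrix representing $M(\B)$: each column $a_i$ is written as a signed sum of characteristic vectors, a block of auxiliary coordinates and auxiliary $0/1$-columns ($r_k^{i,-}, r_j^{i,+}, r_j^{i,++}$) is attached to each $a_i$, and then a pivoting lemma (Oxley, Proposition 3.2.6) is used to show that restricting the matroid of a large resonance arrangement to these columns and contracting the auxiliary ones recovers the matrix $A$. You instead build one global gadget: a flat $V$ of $\A_n$ cut out by the $2$-element hyperplanes $x_{j,k}^{+}+x_{j,l}^{-}=0$, on which each block collapses to $x_{j,k}^{+}=t_j$, $x_{j,l}^{-}=-t_j$, so that a $0/1$-form can realize any integer linear form with coefficients of absolute value at most $N$ by choosing multiplicities of the $+$ and $-$ symbols; restricting $\A_n$ to $V$ (matroid contraction) and deleting the superfluous hyperplanes yields $\B$. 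Both proofs manufacture negative and large coefficients through contraction of $0/1$-normals and both give an $n$ that grows linearly with the entries of an integral representation, but yours avoids any matrix row reduction and argues directly with the arrangement on the flat, which is arguably more transparent geometrically; the paper's per-column gadget, in exchange, gives explicit control over which elements of the minor represent which hyperplanes via the displayed matrices. Two routine points you should make explicit: the defining set of $V$ is dependent, so in the matroid one contracts a maximal independent subset of it and deletes the remaining (now loop or redundant) elements, since contraction is defined here for independent sets; and the final step ``every $\Q$-representable matroid is the matroid of a rational arrangement'' holds only for simple matroids, so loops and parallel elements need a one-line patch (keep extra hyperplanes containing $V$, respectively distinct sets $I_i$ with proportional restrictions) --- a boundary case the paper's own proof also glosses over.
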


The proof is constructive and the size of the required $\A_n$ depends on the size of the entries in an integral representation of $\B$.

\subsection{Chambers of $\A_n$}

The \emph{chambers of $\A_n$} are the connected components of the complement of the hyperplanes in $\A_n$ within $\R^n$.
We denote by $R_n$ the number of chambers of the arrangement $\A_n$.
The arrangement $\A_3$ for instance has $32$ chambers as shown in~\Cref{fig:A3}.

These chambers appear in various contexts, such as quantum field theory where these regions correspond to generalized retarded functions~\cite{Eva95}.
Cavalieri, Johnson, and Markwig proved that the chambers of $\A_n$ are the domains of polynomiality of the double Hurwitz number~\cite{CJM11}.
Subsequently, Gendron and Tahari demonstrated the significance of the chambers of the resonance arrangement in geometric topology~\cite{GT20}.

Billera, Tatch Moore, Dufort Moraites, Wang, and Williams observed that the chambers of $\A_n$ are also in bijection with \emph{maximal unbalanced families} of order $n+1$.
These are systems of subsets of $[n+1] $ that are maximal under inclusion such that no convex combination of their characteristic functions is constant~\cite{BTDWW12}.
Equivalently, the convex hull of their characteristic functions viewed in the $n+1$-dimensional hypercube does not meet the main diagonal.
Such families were independently studied by Bj\"orner as \emph{positive sum systems}~\cite{Bjo15}.

The values of $R_n$ are only known for $n\le 8$ and are given in~\Cref{tab:rn}, cf. also~\cite[\href{https://oeis.org/A034997}{A034997}]{OEIS}.
There is no exact formula known for $R_n$.
The work of Odlyzko and Zuev~\cite{Odl88,Zue92} together with the recent one by Gutekunst, M{\'e}sz{\'a}ros, and Petersen~\cite{GMP19} gives the bounds
\begin{equation}\label{eq_rn}
	n^2 -10n^2/\ln(n)-n+\log_2(n+1)<\log_2(R_n) < n^2-1,
\end{equation}
which in turn yields the asymptotic behavior $\log_2 (R_n) \sim n^2$.
Deza, Pournin, and Rakotonarivo obtained the improved upper bound of $\log_2(R_n) < n^2-3n+2+\log_2(2n+8)$~\cite{DPR}.

Due to a theorem of Zaslavsky the number of chambers of any arrangement over $\R$ equals the sum of all Betti numbers of the arrangement~\cite{Zas75}.
The Betti numbers can be defined via the characteristic polynomial of an arrangement:

\begin{defn}
	For any arrangement of hyperplanes $\A$ in $\F^n$ for any field $\F$ its \emph{characteristic polynomial}~$\chi(\A;t)$ is defined to be
	\[
	\chi(\A;t) := \sum_{S\subseteq \A} (-1)^{|S|}t^{r(\A)-r(S)},
	\]
	where for any subset $S\subseteq \A$ we set $r(S):=\codim \cap_{H\in S} H$.
	The absolute value of the coefficient of $t^{n-i}$ in the characteristic polynomial $\chi(\A;t)$ is called $i$-th \emph{Betti number}.
	One always has $b_0(\A)=1$ and $b_1(\A)=|\A|$.
\end{defn}

In the case of a complex arrangement of hyperplanes, the Betti numbers coincide with the topological Betti numbers of the complement of the arrangement $\C^n \setminus (\cup_{H\in\A}H)$ with coefficients in $\Q$, cf.~\cite[Chapter 5]{OT92} for an overview of the topological study of arrangement complements.

A formula for $\chi(\A_n;t)$ would also yield a formula for $R_n$.
Unfortunately, there is also no such formula known for $\chi(\A_n;t)$.
In fact, the polynomial $\chi(\A_n;t)$ itself is only known for $n\le 7$ as computed in~\cite{KTT11}.

The next result of this article proves that the Betti numbers $b_i(\A_n)$ for any fixed $i>0$ can be computed for all $n>0$ from a fixed finite combination of \emph{Stirling numbers of the second kind} $S(n,k)$ which count the number of partitions of $n$ labeled objects into $k$ non-empty blocks.
The proof is based on Brylawski's broken  circuit complex~\cite{Bry77}.

\begin{theorem}\label{thm:betti_stirling}
	There exist some positive integers $c_{i,k}$ for all $i\ge 0$ and $i+1\le k \le 2^i$ such that for all $n\ge 1$,
	\[
	b_i(\A_n) = \sum_{k=1}^{2^i} c_{i,k} S(n+1,k).
	\]
	Moreover, the constants $c_{i,k}$ are bounded by $c_{i,k} \leq  \binom{2^i-1}{k-1} \frac{(k-1)!}{i!}$.
\end{theorem}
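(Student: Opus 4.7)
The plan is to apply Brylawski's broken circuit theorem, which identifies $b_i(\A_n)$ with the number of size-$i$ subsets of $\A_n$ containing no broken circuit (once a total order on the ground set is fixed), and then to stratify the NBC count by a combinatorial \emph{type} taking values in the finite set of subsets of $\{0,1\}^i$. Each type's contribution will turn out to be an integer multiple of the Stirling number $S(n+1, k)$, where $k$ is the type's size.

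Fix the size-then-lexicographic order on $\{I : \emptyset \neq I \subseteq [n]\}$. Given an ordered tuple $(I_1, \dots, I_i)$ of distinct non-empty subsets, define the \emph{augmented pattern map} $\pi \colon [n+1] \to \{0,1\}^i$ by $\pi(x)_j := [x \in I_j]$ for $x \in [n]$ and $\pi(n+1) := \mathbf{0}$, and set $T' := \pi([n+1])$, $k := |T'|$. Then $\mathbf{0} \in T'$ always, and since any NBC tuple is matroid-independent (the indicator vectors $\mathbf{1}_{I_j}$ must be linearly independent), the set $T' \setminus \{\mathbf{0}\}$ must contain $i$ linearly independent vectors, forcing $k \ge i+1$; trivially $k \le 2^i$. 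The ordered tuples of a given type $T'$ over $[n]$ correspond bijectively to surjections $[n+1] \twoheadrightarrow T'$ sending $n+1 \mapsto \mathbf{0}$, of which there are exactly $(k-1)!\, S(n+1, k)$.

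The heart of the argument is the claim that for each ordered type $T'$ of size $k$, the number of labelings of a partition that yield an NBC tuple is a type-determined integer $\lambda_{T'} \in [1,(k-1)!]$, independent of the partition. Indeed, the matroidal closure of $\{\mathbf{1}_{I_j}\}$ inside $\R^n$ consists precisely of those $\mathbf{1}_J$ with $J$ a union of pattern-fibers, so the list of closure elements outside the tuple is a function of $T'$ alone. For each such $J$ with pattern profile $Q \subseteq T' \setminus \{\mathbf{0}\}$, the fundamental circuit $C(J)$ is determined by $T'$ and $Q$, and the requirement that $J$ fail to be the size--lex minimum of $C(J)$ translates into an order relation on the nonzero patterns of $T'$, induced by the size--lex order on blocks via the labeling. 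Since the $(k-1)!$ labelings of any partition are in bijection with the $(k-1)!$ permutations of $T' \setminus \{\mathbf{0}\}$ (once the block ordering is fixed), exactly $\lambda_{T'}$ of them satisfy every NBC relation; hence the number of NBC ordered tuples of type $T'$ over $[n]$ is exactly $\lambda_{T'}\, S(n+1, k)$.

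Summing over types and dividing by $i!$ (to pass from ordered tuples to unordered NBC sets) yields
\[
b_i(\A_n) \;=\; \sum_{k=i+1}^{2^i} c_{i,k}\, S(n+1, k), \qquad c_{i,k} \;=\; \frac{1}{i!}\!\!\!\sum_{\substack{T' \subseteq \{0,1\}^i \\ \mathbf{0} \in T',\ |T'|=k}}\!\! \lambda_{T'}.
\]
The bound $c_{i,k} \le \binom{2^i-1}{k-1}(k-1)!/i!$ is immediate, since the number of types of size $k$ is $\binom{2^i-1}{k-1}$ and each $\lambda_{T'} \le (k-1)!$; integrality of $c_{i,k}$ follows because $\sum_{|T'|=k}\lambda_{T'}$ counts ordered NBC tuples over $[k-1]$ (where $S(k,k)=1$), so is a multiple of $i!$. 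Positivity of $c_{i,k}$ on $[i+1, 2^i]$ is established by producing at each size $k$ an explicit type with $\lambda_{T'} \ge 1$: starting from the ``disjoint'' type $\{\mathbf{0}, e_1, \dots, e_i\}$ (whose realizations are $i$ pairwise disjoint subsets, trivially NBC) and progressively adjoining further patterns --- or alternatively, the ``full'' type $\{0,1\}^i$ at $k=2^i$, whose closure coincides with the tuple itself. The main obstacle will be the reduction of NBC to an invariant of the induced pattern order --- showing that although each size--lex comparison among unions of $\pi$-fibers depends on the specific block sizes, the count of NBC labelings, taken across all $(k-1)!$ labelings of any given partition, always equals the same type-invariant integer $\lambda_{T'}$.
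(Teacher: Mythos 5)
Your overall skeleton --- Brylawski's theorem, stratifying ordered tuples by the pattern map $[n+1]\to\{0,1\}^i$, observing $i+1\le k\le 2^i$, and counting the realizations of a fixed type by $(k-1)!\,S(n+1,k)$ --- is exactly the structure of the paper's proof (your ``type together with a labeling'' is the paper's $(i,k)$-prototype, and your final formula and bound are the paper's). However, the step you yourself call ``the main obstacle'' is a genuine gap, and the sketch you offer for it does not work. With the size-then-lexicographic order, a comparison between two \emph{unions} of pattern-fibers is not determined by the induced order (or permutation) of the individual blocks: with block sizes $1,2,5$ one has $P_1\cup P_2<P_3$, while with sizes $3,4,5$ one has $P_1\cup P_2>P_3$, although the relative order of the three blocks is the same. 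Hence the NBC conditions for a given labeling cannot be ``translated into an order relation on the nonzero patterns of $T'$''; they genuinely depend on the block cardinalities, so your bijection-with-permutations argument does not establish that exactly $\lambda_{T'}$ of the $(k-1)!$ labelings of \emph{every} partition are NBC. Since the identity $c_{i,k}=\frac{1}{i!}\sum_{|T'|=k}\lambda_{T'}$ rests entirely on this unproved invariance, the proof is incomplete as it stands. (The positivity of $c_{i,k}$ for all $k$ in $[i+1,2^i]$ by ``progressively adjoining patterns'' is also only asserted, and note that the closure contains only those unions of fibers whose pattern-indicator lies in the span of the coordinate functions on $T'$ --- though, as you need, which ones is still a function of $T'$ alone.)

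The paper closes precisely this gap by a different choice of linear order: hyperplanes $H_I$ are ordered by the binary encoding $\sum_{i\in I}2^i$. For pairwise disjoint blocks this order agrees with ordering by largest element, so a binary comparison between two unions of blocks is equivalent to the binary comparison of the corresponding index sets in $[k-1]$. Consequently, whether a labeled type yields a broken circuit is literally the same statement for every partition (\cref{prop:prototypes}), not merely the same on average over the $(k-1)!$ labelings; each prototype is ``functional'' or ``broken'' once and for all, your $\lambda_{T'}$ becomes the number of functional labelings of $T'$, and the rest of your count, the integrality argument via $S(k,k)=1$, and the bound $\binom{2^i-1}{k-1}\frac{(k-1)!}{i!}$ go through unchanged. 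So the missing idea is not a new counting device but the right order on the hyperplanes, chosen so that comparisons are stable under replacing blocks of a partition by blocks of any other partition.
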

The first two trivial cases of this theorem are 
\[
	b_0(A_n) = S(n+1,1), \quad b_1(\A_n) = S(n+1,2).
\]
One can obtain exact formulas for the higher Betti numbers $b_i(\A_n)$ from \Cref{thm:betti_stirling} if one knows $b_i(\A_n)$ for all $1\le n \le 2^i$ since the matrix of Stirling numbers $(S(n,k))_{n,k=1,\dots, 2^i}$ is invertible.
Unfortunately, this already fails for $b_3(\A_n)$ since $\chi(\A_n;t)$ is only known for $n\le 7$.

Combining the upper bound on the constants $c_{i,k}$ given in~\Cref{thm:betti_stirling} with the formula for the Stirling numbers given in~\eqref{eq:stirling} yields the upper bound $b_{i}(\A_n)<\frac{2^{in}}{i!}$ for $i,n\ge 1$.
Summing up these bounds for $i=0,1,\dots,n$ we obtain for $n>1$
\[
	\log_2(R_n) < n^2-n+1.
\]

Analyzing the triangles in the broken circuit in detail we obtain exact formulas for the first two non-trivial coefficients of $\chi(\A_n,t)$, namely $b_2(\A_n)$ and $b_3(\A_n)$,  in terms of Stirling numbers of the second kind.
That is, we determine the exact constants $c_{2,k}$ and $c_{3,k}$ for all relevant $k$.
The resulting values of $b_2(\A_n)$ and $b_3(\A_n)$ are displayed in~\Cref{tab:rn}.

\begin{theorem}\label{thm:betti}
	For any $n\ge 1$ it holds that
	\begin{align*}
		(i)\quad b_2(\A_n) = &  2S(n+1,3) + 3S(n+1,4), \\
		=&\frac{1}{2} (4^n - 3^n - 2^n + 1) \mbox{ and}	\\
		(ii) \quad b_3(\A_n) =  & 9S(n+1,4)+80S(n+1,5) + 345 S(n+1,6) \\
		&+ 840 S(n+1,7) +840S(n+1,8) ,\\
		=&\frac{1}{4!} (4\cdot8^n -15\cdot 6^n +15\cdot 5^n - 14 \cdot 4^n + 18 \cdot 3^n - 7\cdot 2^n-1).
	\end{align*}
\end{theorem}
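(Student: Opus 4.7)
The plan is to extend the broken-circuit counting strategy established in the proof of Theorem~\ref{thm:betti_stirling}: by Brylawski's theorem, $b_i(\A_n)$ equals the number of NBC (no-broken-circuit) subsets of $\A_n$ of size $i$ under a fixed linear order of the hyperplanes. Each $i$-subset $\{H_{I_1},\dots,H_{I_i}\}$ induces a partition of $[n+1]$ via the pattern map $j\mapsto(\mathbf{1}_{I_1}(j),\dots,\mathbf{1}_{I_i}(j))\in\{0,1\}^i$, with $n+1$ always in the $0^i$-block. Since the NBC condition depends only on the combinatorial type of this pattern assignment, for every partition of $[n+1]$ into exactly $k$ blocks the number of NBC $i$-subsets realizing it is a universal constant $c_{i,k}$, and the proof reduces to computing $c_{2,k}$ and $c_{3,k}$ explicitly and then expanding the Stirling numbers.

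For part~(i), I first observe that the $3$-circuits of $\A_n$ are exactly the triples $\{H_A,H_B,H_{A\sqcup B}\}$ with $A,B$ disjoint non-empty subsets of $[n]$; this follows from a direct check of linear dependencies among three distinct $\{0,1\}$-vectors. Each such $3$-circuit contributes exactly one broken circuit of size $2$. A short case analysis over the injective assignments of the $k-1$ non-distinguished blocks of a $k$-block partition into the three patterns $(0,1),(1,0),(1,1)$ yields $c_{2,3}=2$ and $c_{2,4}=3$. The closed form in the second equality then follows by expanding
\[
S(n+1,k)=\frac{1}{k!}\sum_{j=0}^{k}(-1)^{j}\binom{k}{j}(k-j)^{n+1}
\]
and simplifying.

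For part~(ii), the same strategy applies, but with two extra ingredients. First, I classify the $4$-circuits of $\A_n$: writing the dependence on $\mathbf{1}_{I_1},\dots,\mathbf{1}_{I_4}$ as $\sum_j c_j\mathbf{1}_{I_j}=0$ and grouping elements of $[n]$ by their pattern in $\{0,1\}^4$ gives a linear system $\{\sum_{j\in S}c_j=0:S\text{ is a used pattern}\}$; the pattern sets $\mathcal S$ whose system has a one-dimensional solution space with all coordinates non-zero determine the finitely many combinatorial types of $4$-circuits. Second, I enumerate NBC triples realizing a partition with $k$ blocks for $k\in\{4,5,6,7,8\}$. For $k\in\{7,8\}$, a direct span argument shows that the only $\{0,1\}$-vectors in $\mathrm{span}(\mathbf{1}_{I_1},\mathbf{1}_{I_2},\mathbf{1}_{I_3})$ are $0$ and the three $\mathbf{1}_{I_j}$, so the triple sits in no $3$- or $4$-circuit and every such triple is NBC, yielding $c_{3,8}=c_{3,7}=7!/3!=840$. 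For $k\in\{4,5,6\}$ further $\{0,1\}$-vectors appear in the span, producing genuine circuits that must be excluded, and detailed casework produces $c_{3,6}=345$, $c_{3,5}=80$, and $c_{3,4}=9$. The closed form again follows by a Stirling-number expansion.

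The main obstacle is the casework for $c_{3,k}$ with $k\le 6$. As $k$ shrinks, more $\{0,1\}$-combinations of $\mathbf{1}_{I_1},\mathbf{1}_{I_2},\mathbf{1}_{I_3}$ realize subsets of $[n]$, each a potential hyperplane whose inclusion creates a $3$- or $4$-circuit containing a pair or all three of the $H_{I_j}$. A systematic enumeration organized by the incidence structure of the used patterns, together with exploitation of the $S_3$-symmetry on the three coordinates (i.e., on the three hyperplanes), will be needed to reduce the count to a tractable list of orbit representatives.
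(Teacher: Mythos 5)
Your framework is the one the paper itself uses: Brylawski's theorem together with the prototype/partition decomposition behind \Cref{thm:betti_stirling}, and your part~(i) as well as the cases $k=7,8$ of part~(ii) (every pairwise intersecting triple over a partition with at least $7$ blocks is NBC, giving $c_{3,7}=c_{3,8}=840$) are correct and essentially identical to the paper's reasoning. The genuine gap is in the core of part~(ii): the constants $c_{3,4}=9$, $c_{3,5}=80$, $c_{3,6}=345$ \emph{are} the content of the theorem, and you assert them as the output of ``detailed casework'' that is never carried out; the classification of $4$-circuits via the pattern linear system is likewise only announced. As written, the proposal derives nothing beyond what is already visible in the statement for these three coefficients, and this is precisely where all the difficulty sits: for $k\le 6$ you must decide, prototype by prototype (or orbit by orbit under the $\mathfrak{S}_3$-action), whether the triple contains a broken circuit, and the number of cases is substantial. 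Two pitfalls your plan must also address explicitly: a pairwise intersecting triple that is a broken circuit can be completed to a circuit of cardinality four in more than one way, so ``subtract the circuits'' arguments need a fixed bijection or a per-triple criterion to avoid over- or undercounting; and one must check that no triple satisfies a tetrahedron relation $\chi_{A_1}+\chi_{A_2}+\chi_{A_3}=2\chi_{A_4}$ and a rectangle relation $\chi_{A_1}+\chi_{A_3}=\chi_{A_2}+\chi_{A_4}$ simultaneously.

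For comparison, the paper completes this step not by enumerating functional prototypes for each $k$ directly, but by starting from the Jovovic--Kilibarda count of pairwise intersecting triples, $13S(n+1,4)+92S(n+1,5)+360S(n+1,6)+840S(n+1,7)+840S(n+1,8)$ (\Cref{thm:intersecting_families}), classifying the relevant $4$-circuits into four explicit types (\Cref{prop:circuits}), and then counting the resulting size-$3$ broken circuits through bijections with partitions: tetrahedron circuits are in bijection with $4$-block partitions of $[n+1]$ (\Cref{prop:tetrahedra}), and rectangle circuits are in bijection with side--midpoint tuples, giving $3S(n+1,4)+12S(n+1,5)+15S(n+1,6)$ (\Cref{theo:rectangle_circuits}); subtraction then yields $9$, $80$, $345$. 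If you want to salvage your route, you would need to supply the analogue of these counts, e.g.\ by actually listing the $\{0,1\}$-vectors in $\operatorname{span}(\chi_{I_1},\chi_{I_2},\chi_{I_3})$ for each pattern support with $k\le 6$ and deciding maximality in the binary order; until that is done, part~(ii) is unproved.
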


\begin{exmp}
	Using~\Cref{thm:betti} we can compute $\chi(\A_3;t)$ as
	\[
		\chi(\A_3;t) = t^3 - 7t^2 + 15t-9.
	\]
	Thus, the above mentioned result by Zaslavsky again yields $R_3=1+7+15+9=32$.
\end{exmp}

\begin{rmrk}
	The formula for $b_2(\A_n)$ in~\Cref{thm:betti} $(i)$ was also found earlier by Billera (personal communication).
\end{rmrk}

\begin{table}
	\[\def\arraystretch{1.5}{
		\begin{array}{c| c c c c c c c c c}
		n & 1 & 2 & 3 & 4 & 5 & 6 & 7 & 8 & 9 \\
		\hline
		\hline
		b_1(\A_n)=|\A_n| & 1 & 3 & 7 & 15 & 31 & 63 & 127 & 255 & 511\\
		b_2(\A_n) & 0 & 2& 15& 80& 375& 1652& 7035& 29360& 120975\\
		b_3(\A_n) & 0 & 0 & 9 & 170 & 2130 & 22435 & 215439 & 1957200 & 17153460\\
		b_4(\A_n) & 0 & 0 &  0 & 104 & 5270 & 159460 & 3831835 & ? & ?\\
		\hline
		R_n & 2 & 6 & 32 & 370 & 11292 & 1066044 & 347326352 & 419172756930 & ?
		\end{array}}
	\]
	\caption{The known values of $b_i(\A_n)$ for $1\le i \le 4$ and $R_n$ which is the number of chambers of $\A_n$.
	The values for $b_2(\A_n)$ and $b_3(\A_n)$ were computed using~\Cref{thm:betti}.}
	\label{tab:rn}
\end{table}

This article is organized as follows.
After reviewing necessary definitions of matroids and their minors in~\Cref{sec:matroids} we will prove~\Cref{thm:universality} in~\Cref{sec:universality}.
Subsequently, we state the necessary facts on broken circuit complexes in~\Cref{sec:bc} and prove~\Cref{thm:betti_stirling} in~\Cref{sec:stirling}.
Lastly, we give the proof of~\Cref{thm:betti} in~\Cref{sec:b2,sec:b3}.

\section*{Acknowledgments}

I would like to thank Karim Adiprasito for his mentorship and for introducing me to the topic of resonance arrangements.
Furthermore, I am grateful to Louis Billera, Michael Joswig, and Jos\'e Alejandro Samper for helpful conversations and feedback on earlier version of this manuscript.
Last but not least, I am indebted to the graphics department of the Max Planck Institute for Mathematics in the Sciences for helping me to create \Cref{fig:A3}.

\section{Matroids and their Minors}\label{sec:matroids}

In this section we review some basics of matroids and their minors.
Details can be found in~\cite{Oxl11}.

\begin{defn}
	A \emph{matroid} $M$ is a pair $(E,\I)$ where $E$ is a finite \emph{ground set} and $\I$ is a non-empty family of subsets of $E$, called \emph{independent sets} such that
	\begin{enumerate}[(i)]
		\item for all $A'\subseteq A \subseteq E$ if $A\in \I$ then $A'\in\I$ and
		\item if $A,B\in \I$ with $  |A|>|B|$ then there exists $a\in A\setminus B$ such that $B\cup\{a\}\in \I$.
	\end{enumerate}	
\end{defn}

Given some set finite set $E$ and an $r\times E$-matrix $A$ with entries in some field $\F$ we obtain a matroid $M(A)$ on the ground set $E$ whose independent sets are the columns of~$A$ that are linear independent.
A matroid $M$ is called \emph{representable} over a field $\F$ if there exists an $r\times E$-matrix $A$ such that $M=M(A)$.

An arrangement of hyperplanes $\A$ also gives rise to a matroid by writing the coefficients of a linear equation for each $H\in\A$ as columns in a matrix and applying the above construction.
%Thus, we do not necessarily need to distinguish between representing a matroid as a matrix or as an arrangement of hyperplanes.
Similarly, we also get a matroid $M(\A)$ underlying an arrangement $\A$ with ground set $\A$ whose independent set are precisely those whose hyperplanes intersect with codimension equal to the cardinality of the subset.

\begin{defn}
	Let $M=(E,\I)$ be a matroid and $S\subseteq E$. Then one defines:
	\begin{enumerate}
		\item The \emph{restriction} of $M$ to $S$, denoted $M|S$, is the matroid on the ground set $S$ with independent sets $\{ I\in \I\mid I\subseteq S\}$.
		\item Assume that $S$ is independent in $M$.
		Then, the \emph{contraction} of $M$ by $S$, denoted $M/S$, is the matroid on the ground set $E\setminus S$ with independent sets $\{ I \subseteq E\setminus S\mid I\cup S\in \I \}$.
	\end{enumerate}
	A matroid $N$ is called a \emph{minor} of $M$ if $N$ arises from $M$ after a finite sequence of restrictions and contractions.
\end{defn}

Minors play a central role in the theory of matroids.
For instance, Geelen, Gerards and Whittle announced a proof of Rota's conjecture which asserts that matroid representability over a finite field can be characterized by a finite list of excluded minors~\cite{GGW14}.

The restriction of a representable matroid to some subset $S$ is again representable by the same matrix after removing the columns that are not in $S$.
The following lemma establishes a similar connection for contractions of representable matroids.
This also motivates the term minor of a matroid as it corresponds to a minor of a matrix in the representable case.

\begin{lemm}\cite[Proposition 3.2.6]{Oxl11}\label{lem:oxley}
	Let $E$ be some finite set and $A$ an $r\times E$ matrix over a field $\F$.
	Suppose $e\in E$ is the label of a non-zero column of $A$.
	Let $A'$ be the matrix arising from $A$ through row operations by pivoting on some non-zero element in the column $e$.
	Let $A'/e$ be the matrix $A'$ where one removes the row and column containing the unique non-zero entry in the column $e$.
	Then,
	\[
		M(A)/e =M(A')/e=  M(A'/e).
	\]
\end{lemm}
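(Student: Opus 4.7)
The plan is to verify the two equalities $M(A)/e = M(A')/e$ and $M(A')/e = M(A'/e)$ separately. For the first, I would note that elementary row operations amount to left-multiplication by an invertible matrix $P$, so that a set of columns of $A$ is linearly independent if and only if the corresponding columns of $A' = PA$ are. Hence $M(A) = M(A')$, and contracting both matroids at $e$ yields $M(A)/e = M(A')/e$. The substantial content is therefore the second equality, which I would prove by comparing the independent sets of $M(A')/e$ and $M(A'/e)$ directly.

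For this, let $k$ be the unique row of $A'$ containing the nonzero entry of the $e$-th column, write $a_i$ for the $i$-th column of $A'$, and for $i \in E \setminus \{e\}$ let $\bar{a}_i$ denote $a_i$ with its $k$-th coordinate deleted, so that $\{\bar{a}_i\}_{i \neq e}$ are the columns of $A'/e$. Since $a_e \neq 0$, the singleton $\{e\}$ is independent in $M(A')$, and by definition of contraction a set $I \subseteq E \setminus \{e\}$ is independent in $M(A')/e$ if and only if $I \cup \{e\}$ is independent in $M(A')$. The goal is to show that this is in turn equivalent to linear independence of $\{\bar{a}_i\}_{i \in I}$. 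For the forward direction, any nontrivial relation $\sum_{i \in I} \beta_i \bar{a}_i = 0$ lifts to a column combination $\sum_{i \in I} \beta_i a_i$ in $A'$ that vanishes outside row $k$ and therefore equals $c \cdot a_e$ for some scalar $c$; then $c a_e - \sum_{i \in I} \beta_i a_i = 0$ is a nontrivial relation in $A'$, showing $I \cup \{e\}$ is dependent. Conversely, given a nontrivial relation $\gamma a_e + \sum_{i \in I} \beta_i a_i = 0$ in $A'$, restricting to rows $j \neq k$ yields $\sum_{i \in I} \beta_i \bar{a}_i = 0$; the $\beta_i$ cannot all vanish, because otherwise $\gamma a_e = 0$ together with $a_e \neq 0$ would force $\gamma = 0$, contradicting nontriviality.

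The argument is elementary throughout and involves no deep obstacle; it is essentially a bookkeeping exercise in linear algebra. The single point that genuinely deserves care is ensuring the nontriviality of the coefficients $\beta_i$ in the reverse direction, which is precisely where the hypothesis that column $e$ is nonzero, and hence that the pivot produces a well-defined distinguished row $k$, actually enters.
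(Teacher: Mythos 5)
Your argument is correct and complete: row operations preserve the matroid, and your column-by-column comparison of independent sets (with the nontriviality check on the $\beta_i$) is exactly the standard verification. The paper itself offers no proof, citing Oxley's Proposition 3.2.6 instead, and your proof is essentially the same linear-algebra argument found there, so there is nothing to add.
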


\section{Universality of the Resonance Arrangement}\label{sec:universality}

Let $M$ be a matroid of rank $r$ and size $n$ that is representable over $\Q$.
Thus after scaling, we can assume that there is a $r\times n$ matrix $A$ with entries in $\Z$ that represents $M$.
Let $a_1,\dots,a_n\in \Z^r$ be the column vectors of the matrix $A$.
Expressing each vector~$a_i$ for $1\le i \le n$ as a sum of positive and negative characteristic vectors yields
\begin{equation}\label{eq:ai}
a_i = \sum_{j=1}^{m_i^+} \chi_{P_j^{i}}  - \sum_{k=1}^{m_i^-} \chi_{N_k^{i}},
\end{equation}
for some $m_i^+,m_i^-\in \N$ and $P_j^i,N_k^i\subseteq [n]$ for all $1\le j \le m_i^+$ and $1\le k \le m_i^-$.

We work in the extended vector space
\[
\Q^N \coloneqq \Q^r \times \Q^{m_1^-}\times \Q^{m_1^+}\times \Q^{m_1^{+}} \times \dots \times \Q^{m_n^-}\times \Q^{m_n^+}\times \Q^{m_n^{+}} ,
\]
for some appropriate $N\in\N$.
Hence, the vectors $a_1,\dots,a_n$ naturally live in the first factor~$\Q^r$ of $\Q^N$.
We fix the standard basis of $\Q^N$ as
\[
e_1,\dots,e_r,e_1^{1,-},\dots,e_{m_1^-}^{1,-},e_1^{1,+},\dots,e_{m_1^+}^{1,+},e_1^{1,++},\dots,e_{m_1^+}^{1,++},\dots.
\]

Now, we describe a construction which will be used in the proof in~\Cref{thm:universality}.
To this end, we define $0/1$-vectors $v_1,\dots,v_n$ which will eventually represent the matroid $M$ after contracting several other $0/1$-vectors.
We define for each $1\le i \le n$:
\begin{align*}
v_i \coloneqq & \sum_{j=1}^{m_i^+} e_j^{i,++}+  \sum_{k=1}^{m_i^-} e_k^{i,-}, \\
r_k^{i,-} \coloneqq & \chi_{N_k^{i}} + e_k^{i,-} \mbox{ for }1\le k \le m_i^-, \\
r_j^{i,+} \coloneqq & \chi_{P_j^{i}} + e_j^{i,+} \mbox{ for }1\le j \le m_i^+,\\
r_j^{i,++} \coloneqq & e_j^{i,+}  + e_j^{i,++}\mbox{ for }1\le j \le m_i^+. 
\end{align*}

We collect these vectors in the sets $V\coloneqq \{ v_1,\dots,v_n\}$ and
\[
R\coloneqq \{ r_k^{i,-}, r_j^{i,+} ,r_j^{i,++} \mid 1\le i \le n, 1\le k \le m_i^- \mbox{ and }1\le j \le m_i^+ \}.
\]

%Before presenting the proof of~\Cref{thm:universality}, we give an example of this construction.

\begin{exmp}
	Consider the vectors $a_1\coloneqq (1,-2,-1)^T$ and $a_2\coloneqq (-1,0,-1)^T$ in $\Z^3$.
	They can be expressed as $a_1   = \chi_{\{1\}} - \chi_{\{2,3\}} - \chi_{\{2\}}$ and $a_2  = - \chi_{\{ 1,3 \}}$.
	
	Thus, $m_1^{-}=2, m_1^{+}=1,m_2^{-}=1$, and $m_2^{+}=0$.
	The above construction yields the following column vectors in $\Q^8$ depicted in the left matrix below.
	The matrix on the right arises from the one on the left after suitable row operations as described below in the proof of~\Cref{thm:universality}.
	\[
		\begin{array}{r}
		v_1\quad r_1^{1,-}\ r_2^{1,-}\ r_1^{1,+}\, r_1^{1,++}\! v_2\ \quad r_1^{2,-} \\
		\left[\setlength{\arraycolsep}{8pt}\def\arraystretch{1.2}
		\begin{array}{>{\columncolor{gray!20}}c c >{\columncolor{gray!20}}cc>{\columncolor{gray!20}}cc>{\columncolor{gray!20}}c}
		\rowcolor{gray!20}
		0 &  0 & 0 & 1 & 0 & 0 & 1 \\ 
		0 &  1 & 1 & 0 & 0 & 0 & 0 \\ \rowcolor{gray!20}
		0 &  1 & 0 & 0 & 0 & 0 & 1 \\ 
		1 &  1 & 0 & 0 & 0 & 0 & 0 \\ \rowcolor{gray!20}
		1 &  0 & 1 & 0 & 0 & 0 & 0 \\ 
		0 &  0 & 0 & 1 & 1 & 0 & 0 \\ \rowcolor{gray!20}
		1 &  0 & 0 & 0 & 1 & 0 & 0 \\ 
		0 &  0 & 0 & 0 & 0 & 1 & 1 
		\end{array}
		\right]
		\end{array}
		\leadsto
		\begin{array}{r}
		\phantom{r_1^{1,+}}\\
		\left[\setlength{\arraycolsep}{8pt}\def\arraystretch{1.2}
		\begin{array}{>{\columncolor{gray!20}}c c >{\columncolor{gray!20}}cc>{\columncolor{gray!20}}cc>{\columncolor{gray!20}}c}
		\rowcolor{gray!20}
		1 &  0 & 0 & 0 & 0 & -1 & 0 \\ 
	  -2 &  0 & 0 & 0 & 0 & 0 & 0 \\ \rowcolor{gray!20}
	  -1 &  0 & 0 & 0 & 0 & -1 & 0 \\ 
		1 &  1 & 0 & 0 & 0 & 0 & 0 \\ \rowcolor{gray!20}
		1 &  0 & 1 & 0 & 0 & 0 & 0 \\ 
		-1 &  0 & 0 & 1 & 0 & 0 & 0 \\ \rowcolor{gray!20}
		1 &  0 & 0 & 0 & 1 & 0 & 0 \\ 
		0 &  0 & 0 & 0 & 0 & 1 & 1 
		\end{array}
		\right]
		\end{array}.
	\]
	All columns apart from $v_1,v_2$ became standard basis vectors and removing those columns together with all rows apart from the first three yields the matrix with columns $a_1,a_2$.
\end{exmp}

\begin{proof}[Proof of~\Cref{thm:universality}]
	
	Assembling the vectors in $R$ and $V$ to a matrix yields:

	\begin{equation}\label{eq:large_mat}
	\begin{array}{c r}
	& \begingroup\setlength{\arraycolsep}{8pt}
	v_1  \; \quad r^{1,-}_\ast  \quad r^{1,+}_\ast  \quad r^{1,++}_\ast  \quad v_2 \qquad r^{2,-}_\ast \quad r^{2,+}_\ast \quad r^{2,++}_\ast \quad \cdots \quad
	\endgroup\\[2pt]
	\begin{array}{ c} 
	%\phantom{a} \\ 
	\phantom{a} \\ 
	e_\ast\\
	\phantom{\vdots} \\
	%\phantom{a} \\ 
	\phantom{a} \\ 
	e_\ast^{1,-}\\
	\phantom{\vdots} \\
	%\phantom{a} \\ 
	\phantom{a} \\ 
	e_\ast^{1,+}\\
	\phantom{\vdots} \\
	\phantom{a} \\ 
	%\phantom{a} \\ 
	e_\ast^{1,++}\\
	\phantom{a} \\
	\phantom{\vdots} \\
	%\phantom{a} \\ 
	%\phantom{a} \\ 
	e_\ast^{2,-}\\
	\phantom{\vdots} \\
	%\phantom{a} \\ 
	\phantom{a} \\ 
	e_\ast^{2,+}\\
	\phantom{\vdots} \\
	\phantom{a} \\ 
	\phantom{a} \\ 
	e_\ast^{2,++}\\
	\phantom{a} \\
	%\phantom{a} \\ 
	\vdots
	\end{array} & 
	\left[\setlength{\arraycolsep}{8pt}\begin{array}{c ;{.4pt/1pt} c ;{.4pt/1pt} c;{.4pt/1pt} c  ;{2pt/2pt}c ;{.4pt/1pt} c;{.4pt/1pt} c;{.4pt/1pt} c  ;{2pt/2pt}c }
	0 		  & 				&			  		&					& 0 		    & 				 &			  		&	& \\
	\vdots & 	\ast	&	\ast	 		&		0 		  &   \vdots   & 	\ast	 &	\ast	 	  &	0& \cdots\\ 	 	
	0 		  & 				&			  		&		   			&0 		  		& 				 &			  		&	& \\\hdashline[2pt/2pt]
	1		  & 				&			  		&					&0				&				 &					&& \\
	\vdots &I_{m_1^{-}}&      0	 		&		0   		 &\vdots	&0				 &0				  &0&\cdots \\ 	 	
	1 		  & 				&			  		&					&0				&				 &					&& \\	\hdashline[.4pt/1pt]		
	0 		  & 				&			  		&					&0				&				 &					&& \\
	\vdots & 	0       	 &I_{m_1^{+}}&I_{m_1^{+}} &\vdots	  &0			  &0				&0& \cdots\\ 	 	
	0 		  & 				&			  		&					&0				&				 &					&& \\\hdashline[.4pt/1pt]		
	1 		  & 				&			  		&					&0				&				 &					&& \\
	\vdots & 	0       	&	0	    	   &I_{m_1^{+}} &\vdots		&0				&0				  &0& \cdots\\	 	
	1		  & 				&			  		&					&0				&				  &					&	& \\ \hdashline[2pt/2pt]		
	0		  & 				&			  		&					&1				&				 &					&& \\
	\vdots &0				&      0	 		&		0   	  &\vdots	  &I_{m_2^{-}}&0			   &0& \cdots\\ 	 	
	0 		  & 				&			  		&					&1				&				 &					&& \\	\hdashline[.4pt/1pt]		
	0 		  & 				&			  		&					&0				&				 &					&& \\
	\vdots & 	0       	&0				   &0   			&\vdots	  &0			  &I_{m_2^{+}}&I_{m_2^{+}}& \cdots\\ 	 	
	0 		  & 				&			  		&					&0				&				 &					&& \\\hdashline[.4pt/1pt]
	0 		  & 				&			  		&					&1				&				 &					&& \\
	\vdots & 	0       	&	0	    	   &0				 &\vdots		&0				&0				  &I_{m_2^{+}}& \cdots\\	 	
	0		  & 				&			  		&					&1				&				  &					&		& \\[2pt]\hdashline[2pt/2pt]		
	\vdots& \vdots		&\vdots	  		&\vdots		   &\vdots		&\vdots		  &\vdots	 & \vdots& \ddots
	\end{array}\right]
	\end{array}.
	\end{equation}
	Now, we perform row operations on the matrix in~\eqref{eq:large_mat} to ensure that all columns corresponding to vectors in $R$ are standard basis vectors.
	To this end, we apply the following steps for all $1\le i \le n$:
	\begin{enumerate}
		\item We pivot on the entry in row $e_k^{i,-}$ and column $r_k^{i,-}$ for each $1\le k \le m_i^{-}$.
		\item Lastly, we pivot on the entry in row $e_j^{i,s}$ and column $r_j^{i,s}$ for each $1\le j \le m_i^{+}$ and each $s\in \{+,++\}$.
	\end{enumerate}
	By construction and~\Cref{eq:ai}, this procedure yields the following matrix:
	\begin{equation}\label{eq_mat_after_pivot}
	\left[\setlength{\arraycolsep}{8pt}\begin{array}{c ;{.4pt/1pt} c ;{.4pt/1pt} c;{.4pt/1pt} c  ;{2pt/2pt}c ;{.4pt/1pt} c;{.4pt/1pt} c;{.4pt/1pt} c  ;{2pt/2pt}c }
	& 				&			  		&					&  		    	& 				 &			  		&	& \\
	a_1 	& 	0			&	0		 		&		0 		  &   a_2   	& 	0			 &	0		 	  &	0& \cdots\\ 	 	
	& 				&			  		&		   			& 		  		& 				 &			  		&	& \\\hdashline[2pt/2pt]
	1		  & 				&			  		&					&0				&				 &					&& \\
	\vdots &I_{m_1^{-}}&      0	 		&		0   		 &\vdots	&0				 &0				  &0&\cdots \\ 	 	
	1 		  & 				&			  		&					&0				&				 &					&& \\	\hdashline[.4pt/1pt]		
	-1 		  & 				&			  		&					&0				&				 &					&& \\
	\vdots & 	0       	 &I_{m_1^{+}}&0					&\vdots	  &0			  &0				&0& \cdots\\ 	 	
	-1 		  & 				&			  		&					&0				&				 &					&& \\\hdashline[.4pt/1pt]		
	1 		  & 				&			  		&					&0				&				 &					&& \\
	\vdots & 	0       	&	0	    	   &I_{m_1^{+}} &\vdots		&0				&0				  &0& \cdots\\	 	
	1		  & 				&			  		&					&0				&				  &					&	& \\ \hdashline[2pt/2pt]		
	0		  & 				&			  		&					&1				&				 &					&& \\
	\vdots &0				&      0	 		&		0   	  &\vdots	  &I_{m_2^{-}}&0			   &0& \cdots\\ 	 	
	0 		  & 				&			  		&					&1				&				 &					&& \\	\hdashline[.4pt/1pt]		
	0 		  & 				&			  		&					&-1				&				 &					&& \\
	\vdots & 	0       	&0				   &0   			&\vdots	  &0			  &I_{m_2^{+}}&0& \cdots\\ 	 	
	0 		  & 				&			  		&					&-1				&				 &					&& \\\hdashline[.4pt/1pt]
	0 		  & 				&			  		&					&1				&				 &					&& \\
	\vdots & 	0       	&	0	    	   &0				 &\vdots		&0				&0				  &I_{m_2^{+}}& \cdots\\	 	
	0		  & 				&			  		&					&1				&				  &					&		& \\[2pt]\hdashline[2pt/2pt]		
	\vdots& \vdots		&\vdots	  		&\vdots		   &\vdots		&\vdots		  &\vdots	 & \vdots& \ddots
	\end{array}\right].
	\end{equation}
	Therefore, we obtain the matrix $A$ from the one given in~\Cref{eq_mat_after_pivot} by removing all columns corresponding to vectors in $R$ and all rows apart from the first $r$ ones.
	Hence, \Cref{lem:oxley} implies that the matroid $M$ equals the matroid of the resonance arrangement $\A_N$ restricted to $V\cup R$ and contracted by $R$, that is $M$ is a minor of the matroid of $\A_N$.
\end{proof}

\section{The Broken Circuit Complex}\label{sec:bc}

The \emph{Stirling numbers of the second kind} are denoted by $S(n,k)$ and count the number of ways to partition $n$ labeled objects into $k$ nonempty unlabeled blocks.
We will use the standard formula
\begin{equation}\label{eq:stirling}
	S(n,k) = \frac{1}{k!} \sum_{i=0}^k(-1)^i\binom{k}{i} (k-i)^n.
\end{equation}

A tool to compute the Betti numbers of an arrangement is the broken circuit complex:
\begin{defn}\label{def:broken_circuits}
	Let $\A$ be any arrangement and fix any linear order $<$ on its hyperplanes.
	A \emph{circuit} of $\A$ is a minimally dependent subset.
	A \emph{broken circuit} of $\A$ is a set $C\setminus \{H\}$ where $C$ is a circuit and $H$ is its largest element (in the ordering $<$).
	The \emph{broken circuit complex} $BC(\A)$ is defined by
	\[
		BC(\A)\coloneqq \{ T \subset \A \mid T \mbox{ contains no broken circuit}\}.
	\]
\end{defn}

Its significance lies in the following result:
 \begin{theorem}\cite{Bry77}\label{thm:betti_bc}
	Let $\A$ be any arrangement in a vector space $\bF^n$ for some field $\F$ with a fixed linear order~$<$ on its hyperplanes.
	Then for any $1\le i \le n$ it holds that
	\[
		b_{i}(\A) = f_{i-1}(BC(\A)),
	\]
	where $f_i$ is the $f$-vector of the broken circuit complex.
\end{theorem}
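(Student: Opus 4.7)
The plan is to prove the stronger Whitney NBC expansion of the characteristic polynomial,
\[
\chi(\A;t) = \sum_{S \in BC(\A)} (-1)^{|S|} \, t^{r(\A) - |S|},
\]
and then extract the Betti numbers as coefficients. Once this identity is in hand, the coefficient of $t^{r(\A)-i}$ equals $(-1)^i$ times the number of NBC sets of size $i$, i.e.\ $(-1)^i f_{i-1}(BC(\A))$, so its absolute value is exactly $b_i(\A)$. The remaining work is then to establish the Whitney expansion.

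The approach is a sign-reversing, rank-preserving involution on subsets of $\A$ that contain a broken circuit. Fix the given linear order $<$ on $\A$ and start from the definition
\[
\chi(\A;t) = \sum_{S \subseteq \A} (-1)^{|S|} \, t^{r(\A) - r(S)}.
\]
Call $S \subseteq \A$ \emph{bad} if it contains some broken circuit. For each bad $S$, among all broken circuits $B \subseteq S$, select $B(S)$ so that its associated circuit $C(S) = B(S) \cup \{H(S)\}$, with $H(S) = \max C(S)$, has the smallest possible $H(S)$, breaking further ties lexicographically. Define $\sigma(S) := S \triangle \{H(S)\}$. I would then verify: (i) since $H(S)$ lies in the closure of $B(S) \subseteq S \cap \sigma(S)$, toggling it does not change the closure, so $r(\sigma(S)) = r(S)$; (ii) $|\sigma(S)| = |S| \pm 1$, so the sign $(-1)^{|S|}$ flips under $\sigma$; (iii) the same choice of minimal broken circuit is made from $\sigma(S)$, whence $\sigma^2 = \mathrm{id}$.

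After this cancellation, only \emph{good} subsets — the faces of $BC(\A)$ — survive. Any good $S$ is independent in the underlying matroid: a dependent $S$ would contain a circuit $C$, and then $C \setminus \{\max C\}$ would be a broken circuit inside $S$, contradicting goodness. Hence $r(S) = |S|$ for every $S \in BC(\A)$, yielding the Whitney expansion, and the theorem follows by comparing coefficients.

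The main obstacle is verifying the involution property (iii). One must rule out the possibility that toggling $H(S)$ introduces a strictly smaller broken circuit $B'$ in $\sigma(S)$. If such a $B'$ appeared, it must contain $H(S)$, and applying the strong circuit-elimination axiom to $C(S)$ and the circuit $B' \cup \{\max(B' \cup \{\cdot\})\}$ underlying $B'$ would produce a circuit $C''$ contained in $(C(S) \cup C') \setminus \{H(S)\} \subseteq S$ whose broken circuit lies inside $S$ and whose maximum is strictly smaller than $H(S)$; this contradicts the minimal choice of $B(S)$. All other steps — rank preservation, sign flipping, independence of NBC sets, coefficient extraction — are routine bookkeeping once this exchange argument is set up correctly.
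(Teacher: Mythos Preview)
The paper does not give a proof of this theorem at all: it is stated with a citation to Brylawski~\cite{Bry77} and used as a black box. So there is no ``paper's own proof'' to compare against; your proposal stands on its own as a proof of a classical result.

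Your overall strategy---the Whitney NBC expansion via a sign-reversing, rank-preserving involution on subsets containing a broken circuit---is the standard and correct route. Steps (i), (ii), and the deduction that surviving sets are independent are fine.

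Your treatment of (iii), however, is more tangled than necessary and contains a dubious inclusion. You worry that adding $H(S)$ to $S$ might create a new broken circuit $B'$ with \emph{smaller} associated circuit-maximum $H'$. But this cannot happen for an elementary reason: any broken circuit $B'\subseteq S\cup\{H(S)\}$ not already contained in $S$ must contain $H(S)$, and since $H(S)\in B'$ lies strictly below the maximum $H'$ of its completing circuit, we get $H'>H(S)$. Thus no new broken circuit has a smaller circuit-maximum, and the minimal choice (together with the lexicographic tie-break) is unchanged. The circuit-elimination argument you sketch is not needed, and in fact your claimed containment $(C(S)\cup C')\setminus\{H(S)\}\subseteq S$ is not justified: the top element $H'=\max C'$ need not lie in $S\cup\{H(S)\}$ at all. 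Replace that paragraph with the two-line observation above and the involution check goes through cleanly.
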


For the rest of the article we will study the broken circuit complex of the resonance arrangement $\A_n$.
Each subset of  $I \subseteq \left[ n \right]$ can be encoded as a binary number $\sum_{i \in I} 2^i$.
This gives rise to a natural ordering of the hyperplanes in $\A_n$ which we will use as to obtain its broken circuit complex.
In the subsequent proofs we will identify a hyperplane $H_A$ with its defining subset $A$ or its corresponding characteristic vector $\chi_A$ if no confusion arises.

\section{Proof of~\Cref{thm:betti_stirling}}\label{sec:stirling}

%Now we are ready to prove~\Cref{thm:betti_stirling}.
Throughout this section we use the following notation:
Taking all possible intersections of the sets in an $i$-tuple $(A_1,\dots,A_i)$ of pairwise different non-empty subsets of $[n]$ yields a partition $\pi=\{P_1,\dots,P_k\}$ of $[n+1]$ into $k$ blocks with $i+1\le k\le 2^i$ (the block containing $n+1$ exactly contains all elements of $[n]$ which are not contained in any of the sets $A_j$ for $1\le j \le i$.
We order the blocks in the partition $\pi$ by their binary representation as detailed above; in particular we have $n+1\in P_k$.

We can recover the tuple $(A_1,\dots,A_i)$ from the partition $\pi$ through a map 
\begin{align*}
f:[k-1]&\rightarrow \P([i])\setminus\{\emptyset\},\\
\ell &\mapsto \{ j\in [i]\mid P_\ell \subseteq A_j \},
\end{align*}
Note that such a map is injective since the sets in the  $(A_1,\dots,A_i)$ are assumed to be pairwise different.
We call any injective map $f:[k-1]\rightarrow \P([i])\setminus\{\emptyset\}$ an $(i,k)$-\emph{prototype}.

Conversely, given any partition $\pi=\{P_1,\dots,P_k\}$ of $[n+1]$ and a $(i,k)$-prototype $f$ we obtain an $i$-tuple $(A_1,\dots,A_i)$ which we denote by $A_{f,\pi}$ by setting for $1\le j \le i$
\[
A_j \coloneqq  \bigcup_{\ell \in I^f_j }P_{\ell},
\]
where we define $I^f_j \coloneqq \{\ell \in [k-1]  \mid j\in f(\ell) \}$ for $1\le j \le i$ and call these sets the \emph{building blocks} of $f$.

In total, this construction gives a bijection between $i$-tuples of pairwise different non-empty subsets of $[n]$ and pairs of $(i,k)$-prototypes together with partitions of $[n+1]$ into $k$ blocks with $i+1\le k \le 2^i$.

Now the main observation is the following.
Whether an $i$-tuple $A_{f,\pi}$ is a broken circuit depends only on the prototype $f$ but not on the partition $\pi$:

\begin{prop}\label{prop:prototypes}
	In the above notation, let $f:[k-1]\rightarrow \P([i])\setminus\{\emptyset\}$ be an $(i,k)$-prototype.
	Assume there exists a partition $\pi=\{P_1,\dots,P_k\}$ of $[n+1]$ such that the $i$-tuple $A_{f,\pi}=(A_1,\dots,A_i)$ is a broken circuit of $\A_n$ (in the order induced by the binary representation).
	
	Let $\widetilde{\pi}=\{\widetilde{P_1},\dots,\widetilde{P_k}\}$ be any partition of $[\widetilde{n}+1]$ for some $\widetilde{n}\ge 1$ into $k$ non-empty parts.
	Then the $i$-tuple $A_{f,\widetilde{\pi}}=(\widetilde{A_1},\dots,\widetilde{A_i})$ is also a broken circuit of $\A_{\widetilde{n}}$.
\end{prop}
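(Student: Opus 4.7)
The plan is to show that whether $A_{f,\pi}$ forms a broken circuit of $\A_n$ is an intrinsic property of $f$, so that both the circuit and the "largest element" conditions transfer directly from $\pi$ to $\widetilde\pi$.

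First, pass to a linear model. Because the blocks $P_1,\dots,P_k$ are pairwise disjoint and nonempty, the characteristic vectors $\chi_{P_1},\dots,\chi_{P_{k-1}}$ are linearly independent in $\R^n$, and the identity $\chi_{A_j}=\sum_{\ell\in I_j^f}\chi_{P_\ell}$ identifies the linear dependencies among $\chi_{A_1},\dots,\chi_{A_i}$ with those among the indicator vectors $\mathbf{1}_{I_1^f},\dots,\mathbf{1}_{I_i^f}\in\R^{k-1}$; in particular, independence of $A_{f,\pi}$ is intrinsic to $f$. For any nonempty $B\subseteq[n]$, $\chi_B$ lies in the span of the $\chi_{A_j}$ only if $B$ is constant on each block, forcing $B=\bigcup_{\ell\in L}P_\ell$ for a nonempty $L\subseteq[k-1]$ (note $k\notin L$ since $n+1\in P_k$ but $n+1\notin B$). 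The minimal-dependence condition for $\{H_B,H_{A_1},\dots,H_{A_i}\}$ to be a circuit then reads $\mathbf{1}_L=\sum_j c_j\mathbf{1}_{I_j^f}$ with every $c_j\neq 0$ and $L\neq I_j^f$, which is a condition on $f$ and $L$ alone.

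The decisive step is the ordering lemma: the ordering of the blocks by binary representation implies $\max P_1<\max P_2<\dots<\max P_k$. Indeed, if $\ell<\ell'$ but $\max P_\ell>\max P_{\ell'}$, then
\[
\sum_{i\in P_\ell}2^i\;\geq\; 2^{\max P_\ell}\;\geq\; 2^{\max P_{\ell'}+1}\;>\;2^{\max P_{\ell'}+1}-2\;\geq\;\sum_{i\in P_{\ell'}}2^i,
\]
contradicting $P_\ell<P_{\ell'}$, while disjointness excludes equal maxima. With this, comparing $B=\bigcup_{\ell\in L}P_\ell$ and $A_j=\bigcup_{\ell\in I_j^f}P_\ell$ in binary order reduces to $\max(B\triangle A_j)\in B$; by the lemma this maximum equals $\max P_{\ell^*}$ with $\ell^*=\max(L\triangle I_j^f)$, so $B>A_j$ in binary order iff $L>I_j^f$ in the binary order on subsets of $[k-1]$---again an intrinsic condition.

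Assembling the pieces, $A_{f,\pi}$ is a broken circuit of $\A_n$ iff there exists a nonempty $L\subseteq[k-1]$ with $L\notin\{I_1^f,\dots,I_i^f\}$ satisfying the circuit condition above and maximal in the binary order among $\{L,I_1^f,\dots,I_i^f\}$. Because this criterion refers only to $f$, the very same $L$ certifies $A_{f,\widetilde\pi}$ as a broken circuit of $\A_{\widetilde n}$, with completing hyperplane $H_{\widetilde B}$ for $\widetilde B=\bigcup_{\ell\in L}\widetilde{P_\ell}$. I expect the chief technical hurdle to be the ordering lemma, since one must confirm that the binary order on $\A_n$, restricted to hyperplanes whose defining set is a union of blocks, collapses to the intrinsic binary order on index sets in $[k-1]$ and is independent of the concrete partition.
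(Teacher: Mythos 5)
Your proof is correct and takes essentially the same approach as the paper: rewrite the defining circuit relation and the binary-order comparisons in terms of the building blocks $I_j^f$ and the index set $L$ of the completing hyperplane, observe that both are conditions on $f$ and $L$ alone, and transfer them to the new partition. The only real difference is that you make explicit the ordering step (blocks ordered by binary representation have strictly increasing maxima, so comparing unions of blocks reduces to comparing index sets in $[k-1]$), which the paper uses but does not prove.
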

\begin{proof}
	By assumption, the tuple $A_{f,\pi} = (A_1,\dots,A_i)$ is a broken circuit.
	Thus, there exists some $C \subseteq [n]$ and $\lambda_1,\dots,\lambda_i\in \R^*$ such that
	\begin{equation}\label{eq:bc}
		\sum_{j=1}^i \lambda_j\chi_{A_j} = \chi_{C},
	\end{equation}
	and $A_j < C$ for all $1\le j \le i$.
	
	This implies that $C$ is also a union of the first $k-1$ parts of the partition $\pi$, that is there exists some $I_C\subseteq [k-1]$ such that $C=\bigcup_{\ell \in I_C}P_\ell$.
	Hence, we can rewrite~\Cref{eq:bc} as
	\begin{equation}\label{eq:bc2}
		\sum_{j=1}^i \lambda_j\sum_{\ell \in I_j^f} P_\ell  = \sum_{\ell \in I_C} P_\ell,
	\end{equation}
	Subsequently, the fact $A_j < C$  yields $I_j^f < I_C$ for all $1\le j \le i$ where $I_j^f$ are the building blocks of the prototype $f$ and the order is the one induced by the binary representation of subsets of $[k-1]$.
	
	Now consider the partition $\widetilde{\pi}$ of $[\widetilde{n}+1]$.
	Using the building block $I_C$ of $C$ we can define a corresponding subset of $[\widetilde{n}]$ by setting $\widetilde{C}\coloneqq \bigcup_{\ell \in I_C} \widetilde{P_\ell}$.
	Thus,~\Cref{eq:bc2} implies
	\[
	\sum_{j=1}^i \lambda_j\sum_{\ell \in I_j^f} \widetilde{P_\ell}  = \sum_{\ell \in I_C} \widetilde{P_\ell}.
	\]
	Therefore, the tuple $(\widetilde{A_1},\dots,\widetilde{A_i},\widetilde{C})$ is a circuit of $\A_{\widetilde{n}}$.
	Using the fact $I_j^f < I_C$ we obtain again $\widetilde{A_j}<\widetilde{C}$ for all $1\le j \le i$ which completes the proof that $A_{f,\widetilde{\pi}}$ is a broken circuit in~$\A_{\widetilde{n}}$.
\end{proof}

	In light of~\Cref{prop:prototypes} we can subdivide prototypes into two sets.
	We call those which contain a broken circuit for some partition, and thus for all partitions, \emph{broken} prototypes.
	Otherwise, we call a prototype \emph{functional}.

\begin{proof}[Proof of~\Cref{thm:betti_stirling}]
	As explained above, any $i$-tuple of subsets of $[n]$ can be obtained from an $(i,k)$-prototype and a partition $\pi$ of $[n+1]$ into $k$ blocks with $i+1\le k \le 2^i$.
	\Cref{thm:betti_bc} then implies that we can compute the Betti number $b_i(\A_n)$ for any $i\ge 0$ through functional prototypes and partitions.
	We correct the fact that latter yields ordered tuples unlike the elements  in the broken circuit complex by multiplying the Betti numbers  $b_i(\A_n)$ by $i!$ in the following computation:
	\begin{align*}
		b_i(\A_n) i!=& |\{ X=(A_1,\dots,A_i)\mid A_j\in \P([n])\setminus\{\emptyset\} , A_j\neq A_{j'} \mbox{ for all }j\neq j'\mbox{ and}\\
		&\; \mbox{ $X$ does not contain a broken circuit}  \}|\\ 
		=& \sum_{k=i+1}^{2^i} |\{A_{f,\pi}\mid f \mbox{ functional $(i,k)$-prototype and}\\
		& \qquad\quad \pi \mbox{ partition of $[n+1]$ into $k$ blocks}\}|\\
		=& \sum_{k=i+1}^{2^i} |\{\mbox{functional $(i,k)$-prototypes}\}|S(n+1,k).
	\end{align*}
	This already proves that for each $i\ge 0$ the Betti number $b_i(\A_n)$ can be computed by a combination of Stirling numbers which is independent from $n$. This settles the first claim of the theorem.
	
	For the second claim, note that the above argument shows
	\[
		c_{i,k} = \frac{|\{\mbox{functional $(i,k)$-prototypes}\}|}{i!},
	\]
	for all $i\ge 1$ and $i+1\le k \le 2^i$.
	Bounding the number of functional $(i,k)$-prototypes by the number of all $(i,k)$-prototypes which are merely injective functions $f:[k-1]\rightarrow \P([i])\setminus\{\emptyset\}$ immediately yields for all $i\ge 1$ and $i+1\le k \le 2^i$
	\[
	c_{i,k} \le  \binom{2^i-1}{k-1}\frac{(k-1)!}{i!}.\qedhere
	\]
\end{proof}

\begin{rmrk}
	The above upper bound on $c_{2,2^2}$ and $c_{3,2^3}$ actually agrees with the actual value of these constants given in~\Cref{thm:betti} ($3$ and $840$).
	It can be shown that the given bound on $c_{i,2^i}$ is attained for all $i\ge 1$, that is all $(i,k)$-prototypes are functional.
	For $c_{i,k}$ with $i\ge 1$ and $k<2^i$ the upper bound is not tight in general.
\end{rmrk}

\section{The Betti Number $b_2(\A_n)$}\label{sec:b2}

We compute $b_2(\A_n)$ using \Cref{thm:betti_bc}.

\begin{prop}
	For all $n\ge 1$ it holds that
	\[
		f_1(BC(\A_n))= 2S(n+1,3) + 3S(n+1,4) .
	\]
\end{prop}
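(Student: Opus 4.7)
The plan is to apply \Cref{thm:betti_stirling} with $i=2$, reducing the proposition to showing $c_{2,3}=2$ and $c_{2,4}=3$. Recall that $c_{2,k}$ is the number of functional $(2,k)$-prototypes divided by $2!$, so I will explicitly enumerate these prototypes, which are injective maps $f\colon[k-1]\to\P([2])\setminus\{\emptyset\}=\{\{1\},\{2\},\{1,2\}\}$. The only real content is identifying which prototypes are broken.

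The first step is to characterize when a two-element subset $\{A_1,A_2\}$ of $\A_n$ is a broken circuit. Two distinct nonzero $0/1$-vectors are automatically linearly independent, and no singleton $\{A\}$ can be a broken circuit (since a 2-element circuit would force two proportional, hence equal, $0/1$-vectors), so $\{A_1,A_2\}$ contains a broken circuit iff it is one. That in turn requires some $C>A_1,A_2$ with $\chi_C=\alpha\chi_{A_1}+\beta\chi_{A_2}$ for some $\alpha,\beta\neq 0$. Splitting $[n]$ into the sectors $A_1\cap A_2$, $A_1\setminus A_2$, $A_2\setminus A_1$, and the complement, and requiring that $\alpha\chi_{A_1}+\beta\chi_{A_2}$ take values in $\{0,1\}$ on each, a short case analysis forces $\alpha=\beta=1$ and $A_1\cap A_2=\emptyset$, yielding $C=A_1\cup A_2$; conversely, any disjoint pair produces the broken circuit $\{A_1,A_2\}\subset\{A_1,A_2,A_1\cup A_2\}$ since $A_1,A_2\subsetneq A_1\cup A_2$ implies $A_1\cup A_2>A_1,A_2$ in the binary order. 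Therefore $\{A_1,A_2\}$ is a broken circuit iff $A_1\cap A_2=\emptyset$.

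The second step translates this through the prototype bijection: since $A_j=\bigcup_{\ell\in I_j^f}P_\ell$, the sets $A_1$ and $A_2$ are disjoint iff $I_1^f\cap I_2^f=\emptyset$ iff no $\ell\in[k-1]$ satisfies $f(\ell)=\{1,2\}$. For $k=3$ there are $3\cdot 2=6$ injections $[2]\to\{\{1\},\{2\},\{1,2\}\}$, and exactly the two that map into $\{\{1\},\{2\}\}$ avoid the value $\{1,2\}$, so $c_{2,3}=(6-2)/2!=2$. For $k=4$ the prototypes are bijections $[3]\to\{\{1\},\{2\},\{1,2\}\}$; there are $3!=6$ of them, and each hits $\{1,2\}$ exactly once, so none are broken and $c_{2,4}=6/2!=3$. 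Substituting into \Cref{thm:betti_stirling} gives the claimed formula.

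The only delicate point is the sector-by-sector case analysis in the first step; once the equivalence with disjointness is in hand, the enumeration over a three-element codomain is entirely routine.
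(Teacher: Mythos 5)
Your argument is correct, and it reaches the formula by a somewhat different route than the paper: instead of directly counting the pairs $\{H_A,H_B\}$ with $A\cap B\neq\emptyset$ by splitting into the nested case (giving $2S(n+1,3)$) and the crossing case (giving $3S(n+1,4)$), you invoke the proof of \Cref{thm:betti_stirling}, identify $c_{2,k}$ with the number of functional $(2,k)$-prototypes divided by $2!$, and enumerate the six injections for $k=3$ and the six bijections for $k=4$. This is a legitimate repackaging---the paper's own remark following its proof makes exactly this translation, with the counts $2$ of $3$ and $3$ of $3$ referring to prototypes up to relabeling---and your version has the merit of actually justifying the classification of three-element circuits (equivalently, that $\{H_{A_1},H_{A_2}\}$ is a broken circuit iff $A_1\cap A_2=\emptyset$), which the paper asserts without proof. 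One caveat in your first step: requiring only that $\alpha\chi_{A_1}+\beta\chi_{A_2}$ be $0/1$-valued does \emph{not} force $\alpha=\beta=1$ and disjointness; in the nested case $A_1\subsetneq A_2$ the combination $\chi_{A_2}-\chi_{A_1}=\chi_{A_2\setminus A_1}$ is also admissible. To exclude it you must use the condition $C>A_1,A_2$ that you stated in the setup, together with the observation that the binary order refines inclusion, so that $A_2\setminus A_1<A_2$ and $A_2$, not $C$, is the largest element of that circuit. With that point made explicit, the case analysis indeed leaves only disjoint pairs with $C=A_1\cup A_2$, and the remaining enumeration ($c_{2,3}=4/2!=2$, $c_{2,4}=6/2!=3$) is correct.
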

\begin{proof}
	The only circuits of $\A_n$ of cardinality three are of the form $\{ H_A,H_B,H_{A\cup B}\}$ where $A,B$ are disjoint subsets of $\left[ n \right]$.
	Hence, the only broken circuits of cardinality two are of the form $\{ H_A,H_B\}$ where $A,B$ are disjoint subsets of $\left[ n \right]$.
	Therefore, we are left with counting subsets of the form  $\{ H_A,H_B\}$ where both $A,B$ are non-empty subsets of $\left[ n \right]$ and $A\cap B \neq \emptyset$.
	
	Assume $A\not \subseteq B$ and $B\not \subseteq A$.
	This case corresponds to a partition of $\left[ n+1 \right]$ into four nontrivial blocks $P_1,P_2,P_3,P_4$ where we assume that $n+1\in P_4$.
	Subsequently, we can choose any $P_i$ with $1\le i \le 3$ to be the intersection and set $A\coloneqq P_j \cup P_i$ and $B\coloneqq P_k\cup P_i$ where $\{j,k \}\coloneqq \{1,2,3\}\setminus \{i\}$.
	Thus, there are $3S(n+1,4)$ many possibilities of that type.
	
	Now assume  $A\not \subseteq B$. The subsets of the form $\{ H_A,H_B\}$ with $A \subseteq B$ corresponds to a partition of $\left[ n+1 \right]$ into three nontrivial blocks $P_1,P_2,P_3$ where we again assume $n+1 \in P_3$.
	In this situation we have the two families $\{H_{P_1},H_{P_1\cup P_2}\}$ and $\{H_{P_2},H_{P_1\cup P_2}\}$ which yields $2S(n+1,3)$ possibilities in total of that type.
\end{proof}

\begin{rmrk}
	In the language of the previous section, the above proof implies that all three $(2,4)$-prototypes are functional whereas only two of the three $(2,3)$-prototypes are functional.
\end{rmrk}

Combining this proposition with \Cref{thm:betti_bc} and Equation~\eqref{eq:stirling} yields a proof of the announced formula for $b_2(\A_n)$:
\begin{proof}[Proof of~\Cref{thm:betti} $(i)$]
	We compute:
	\begin{align*}
		b_2(\A_n) = & 2S(n+1,3) + 3S(n+1,4) \\
		=& \frac{2}{3!}(3^{n+1} -3\cdot 2^{n+1} +3)\frac{3}{4!} + (4^{n+1} -4\cdot 3^{n+1} + 6\cdot 2^{n+1} -4)\\
		=&\frac{1}{2} (4^n - 3^n - 2^n + 1).\qedhere
	\end{align*}
\end{proof}

\section{The Betti Number $b_3(\A_n)$}\label{sec:b3}

To compute $b_3(\A_n)$ we again use the broken circuit complex with the ordering induced by the encoding in binary numbers.
Hence, we need to understand which families $\{H_A,H_B,H_C\}$ form a broken circuit of $\A_n$ where $A,B,C$ are subsets of $\left[ n \right]$ that are pairwise not disjoint.
We use the following result due to Jovovic and Kilibarda:
\begin{theorem}[\cite{JK99}]\label{thm:intersecting_families}
	For any $n\ge1$, the number of families $\{A,B,C\}$ where $A,B,C$ are subsets of $\left[ n \right]$ that are pairwise not disjoint is
	\[
		 \frac{1}{3!}(8^n - 3\cdot6^n + 3\cdot 5^n - 4\cdot  4^n + 3 \cdot 3^n + 2\cdot 2^n - 2).
	\]
	Expanding this numbers as sum of Stirling number of the second kind we obtain the equivalent formula
	\begin{equation}\label{eq:intersection_family_stirling}
		13 S(n+1,4) + 92S(n+1,5) + 360S(n+1,6) + 840S(n+1,7) + 840 S(n+1,8).
	\end{equation}
	We call such families \emph{pairwise intersecting}.
\end{theorem}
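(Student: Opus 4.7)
The plan is to count ordered triples $(A, B, C)$ of pairwise intersecting, pairwise distinct subsets of $[n]$ and then divide by $3!$; the rational form will follow from two nested applications of inclusion--exclusion, and the Stirling form from the partition-labeling framework of~\Cref{sec:stirling}.

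First I would count ordered triples by the membership pattern of each element of $[n]$ in $\{A, B, C\}$, giving $8^n$ in all. Imposing $A \cap B = \emptyset$ eliminates two of the eight membership states, leaving $6^n$; imposing any two disjointness conditions (which must share one of $A, B, C$) leaves $5^n$; imposing all three leaves $4^n$. Inclusion--exclusion then yields $8^n - 3 \cdot 6^n + 3 \cdot 5^n - 4^n$ pairwise intersecting ordered triples. To enforce distinctness, I would subtract the degenerate triples: when $A = B$ the condition reduces to $A \cap C \neq \emptyset$, counted by $4^n - 3^n$, with analogous counts for the other two coincidences, and when $A = B = C$ it reduces to $A \neq \emptyset$, counted by $2^n - 1$. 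A second inclusion--exclusion on the three coincidence events contributes $3(4^n - 3^n) - 2(2^n - 1)$ degenerate ordered triples; subtracting and dividing by $3!$ produces the announced rational expression.

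For the Stirling form I would invoke the bijection of~\Cref{sec:stirling} between ordered triples $(A, B, C)$ of subsets of $[n]$ and pairs $(\pi, f)$ consisting of a set partition $\pi$ of $[n + 1]$ into $k$ blocks with the $(n+1)$-containing block distinguished, together with an injection $f$ of the remaining $k - 1$ blocks into the seven nonempty subsets of $\{A, B, C\}$. Under this bijection the triple is pairwise intersecting and distinct iff the image of $f$ (i) contains either $\{A, B, C\}$ or all three of $\{A, B\}, \{A, C\}, \{B, C\}$, and (ii) is not contained in any of the three coincidence sets $\{\{C\}, \{A, B\}, \{A, B, C\}\}$, $\{\{B\}, \{A, C\}, \{A, B, C\}\}$, $\{\{A\}, \{B, C\}, \{A, B, C\}\}$. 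Since these conditions depend only on the image of $f$, the ordered count splits as $\sum_{k = 2}^{8} h(k) \, S(n + 1, k)$ where $h(k)$ counts valid injections on a set of size $k - 1$; a direct binomial enumeration yields $h(2) = h(3) = 0$, $h(4) = 78$, $h(5) = 552$, $h(6) = 2160$, and $h(7) = h(8) = 5040$. Dividing by $3!$ produces the integer coefficients $13, 92, 360, 840, 840$ in the stated Stirling form.

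The main technical obstacle is the computation of $h(4)$. At $k = 4$ the pairwise intersecting condition alone allows $\binom{6}{2} + 1 = 16$ images of size three, giving $16 \cdot 3! = 96$ ordered injections; the three coincidence sets are themselves of size three, so each contributes exactly one image violating distinctness, and any pair of coincidences would force $A = B = C$ and hence an image of size at most one, which is impossible at $k = 4$, so $h(4) = 96 - 18 = 78$. For $k \geq 5$ the image size $k - 1$ strictly exceeds three, so distinctness is automatic and only the pairwise intersecting enumeration is needed. Finally, the equivalence of the rational and Stirling forms can be verified directly by expanding each $S(n + 1, k)$ via~\Cref{eq:stirling} and collecting powers of each constant.
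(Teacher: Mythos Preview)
The paper does not prove \Cref{thm:intersecting_families}; it is quoted from \cite{JK99} and used as a black box, with the Stirling expansion~\eqref{eq:intersection_family_stirling} stated as an equivalent rewriting. So there is no in-paper proof to compare against.

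Your argument is correct. The inclusion--exclusion for the rational form is clean and matches what one would expect from \cite{JK99}: the counts $8^n,6^n,5^n,4^n$ for zero, one, two, three imposed disjointness conditions are right, and your second inclusion--exclusion on the coincidence events $A=B$, $A=C$, $B=C$ correctly uses that any two of them force the third, giving the correction $3(4^n-3^n)-2(2^n-1)$.

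Your derivation of the Stirling form via the prototype framework of \Cref{sec:stirling} is a genuine addition beyond what the paper does: the paper only asserts that the two expressions agree, whereas you explain \emph{why} by interpreting the coefficients $13,92,360,840,840$ as $\tfrac{1}{3!}$ times the number of injections $[k-1]\to\mathcal P([3])\setminus\{\emptyset\}$ whose image witnesses pairwise intersection and avoids the three coincidence patterns. Your characterisation of pairwise intersection (image contains $\{A,B,C\}$ or all three doubletons) and of distinctness (image not contained in one of the three $3$-element coincidence sets) is correct, and the resulting values $h(4)=78$, $h(5)=552$, $h(6)=2160$, $h(7)=h(8)=5040$ check out; one can verify them against the rational formula at $n=3,4$, where both sides give $13$ and $222$. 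One minor remark: the lower bound $k\ge i+1$ stated in \Cref{sec:stirling} is not actually needed for your count, and your inclusion of $k=2,3$ with $h(2)=h(3)=0$ is harmless.
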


As a first step we will classify the circuits of $\A_n$ of cardinality four.
To determine the broken circuits it suffices to consider circuits whose first three elements in the ordering $<$ are pairwise intersecting.
Otherwise, the edges between these elements are already broken circuits and therefore not part of $BC(\A_n)$.
\begin{defn}
	We call a circuit in $\A_n$ \emph{relevant} if the corresponding subsets of $\left[ n \right]$ which are not maximal in the circuit are pairwise intersecting.
\end{defn}

\begin{prop}\label{prop:circuits}
	For $n\ge 1$, a four element family in $\A_n$ is a relevant circuit if and only if it is one of the following types for subsets $A_1,A_3,X\subseteq \left[n\right]$ such that 
	\begin{equation}\label{eq:sets}\tag{$\star$} 
	A_1\cap A_3\neq \emptyset, A_1\setminus A_3\neq \emptyset,A_3\setminus A_1\neq \emptyset \mbox{ and }A_1\cap A_3 \cap X=\emptyset:
	\end{equation}
	\begin{enumerate}[(i)]
		\item $\{H_{A_1},H_{A_3},H_{A_1\triangle A_3},H_{A_1\cup A_3}\}$,
		\item $\{H_{A_1},H_{A_3},H_{A_1\cap A_3},H_{A_1\triangle A_3}\}$,
		\item $\{H_{A_1},H_{A_3},H_{A_1\cap A_3},H_{A_1\cup A_3}\}$ or
		\item $\{H_{A_1},H_{A_3},H_{(A_1\cap A_3)\cup X},H_{(A_1\cup A_3)\setminus X}\}$.
	\end{enumerate}
	In each case, we assume that the last element in each set is the largest with respect to the ordering $<$.
\end{prop}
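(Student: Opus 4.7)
The plan is to prove both implications of the stated equivalence. For the forward direction, I would verify that each of the four listed families gives a relevant 4-circuit. The linear dependence in each case follows directly from the identities $\chi_{A\cup B}+\chi_{A\cap B}=\chi_A+\chi_B$ and $\chi_{A\triangle B}=\chi_A+\chi_B-2\chi_{A\cap B}$, producing
\begin{align*}
\chi_{A_1}+\chi_{A_3}+\chi_{A_1\triangle A_3}-2\chi_{A_1\cup A_3}&=0,\\
\chi_{A_1}+\chi_{A_3}-2\chi_{A_1\cap A_3}-\chi_{A_1\triangle A_3}&=0,\\
\chi_{A_1}+\chi_{A_3}-\chi_{A_1\cap A_3}-\chi_{A_1\cup A_3}&=0,\\
\chi_{A_1}+\chi_{A_3}-\chi_{(A_1\cap A_3)\cup X}-\chi_{(A_1\cup A_3)\setminus X}&=0
\end{align*}
for (i)--(iv) respectively, with all four coefficients nonzero. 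Minimality can be checked by exhibiting, for each dropped vector, an atom of the partition of $[n]$ induced by the four sets that witnesses independence of the remaining triple, using the hypothesis $A_1\cap A_3,A_1\setminus A_3,A_3\setminus A_1\neq\emptyset$ (and, in (iv), the splitting of $A_1\triangle A_3$ by $X$). The conditions in $(\star)$ guarantee that each of the three non-maximum sets contains a piece of $A_1\cap A_3$, making them pairwise intersecting; verifying the binary-order convention that the listed fourth element is the maximum then completes the forward direction.

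For the converse, I would take a relevant 4-circuit $\{H_{B_1},\dots,H_{B_4}\}$ with unique (up to scalar) dependence $\sum_i\lambda_i\chi_{B_i}=0$, all $\lambda_i\neq 0$, and partition $[n]$ into atoms $P_S=\{j\in[n]:\{i:j\in B_i\}=S\}$ for $S\subseteq\{1,2,3,4\}$. Each non-empty $P_S$ enforces $\sum_{i\in S}\lambda_i=0$, so every non-empty atom has $|S|\geq 2$, and the indicator vectors of the non-empty atoms must span a $3$-dimensional subspace of $\R^4$. The key step will be to enumerate $\lambda$ up to scalar and permutation: a direct case analysis of sign/magnitude patterns of $(\lambda_1,\lambda_2,\lambda_3,\lambda_4)$ shows that the only admissible patterns are
\[(1,1,1,-2),\quad (1,1,-2,-1),\quad (1,1,-1,-1).\]
Relevance will be used to exclude the additional pattern $(1,1,1,-1)$, whose only zero-sum subsets of size $\geq 2$ are $\{1,4\},\{2,4\},\{3,4\}$ and therefore force $B_4=B_1\sqcup B_2\sqcup B_3$ with $B_1,B_2,B_3$ pairwise disjoint.

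For each admissible $\lambda$-pattern, the set of zero-sum subsets of size $\geq 2$ pins down the admissible atom configurations. For $(1,1,1,-2)$ all three atoms of types $\{1,2,4\},\{1,3,4\},\{2,3,4\}$ must be populated, yielding $B_4=B_1\cup B_2\cup B_3$ and $B_i\triangle B_j=B_k$, matching form (i). For $(1,1,-2,-1)$ the zero-sum subsets are $\{1,4\},\{2,4\},\{1,2,3\}$, giving $B_3=B_1\cap B_2$ and $B_4=B_1\triangle B_2$, matching form (ii). For $(1,1,-1,-1)$ the zero-sum subsets are $\{1,3\},\{1,4\},\{2,3\},\{2,4\},\{1,2,3,4\}$, and relevance forces $P_{\{1,2,3,4\}}\neq\emptyset$; setting $A_1=B_1$, $A_3=B_2$ and $X=P_{\{1,3\}}\cup P_{\{2,3\}}\subseteq A_1\triangle A_3$ yields $B_3=(A_1\cap A_3)\cup X$ and $B_4=(A_1\cup A_3)\setminus X$, matching form (iv); form (iii) is the subcase $X=\emptyset$.

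The main obstacle will be the enumeration of $\lambda$-patterns, which requires a careful sweep over sign/magnitude configurations to rule out all patterns except the three above. A secondary subtlety lies in the $(1,1,-1,-1)$ case, where the parametrization via $X$ must be reconciled with the binary-order requirement that one of the four listed sets is the maximum; I expect this to need a short case distinction based on whether $X$ does or does not contain the maximum element of $A_1\triangle A_3$, so that the labeling identifying $(A_1\cup A_3)\setminus X$ (or, symmetrically, $(A_1\cap A_3)\cup X$) as the largest element is valid.
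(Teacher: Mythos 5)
Your plan follows essentially the same strategy as the paper's proof: exploit the unique (up to scale) dependency of the four $0/1$-vectors, do a case analysis on its sign/magnitude pattern, and use relevance together with the maximality convention to read off the four forms. The paper organizes this slightly differently: it declares from the start that $A_4$ is the binary maximum, which immediately kills the sign pattern you call $(1,1,1,-1)$-adjacent (one positive, three negative) and lets maximality do work inside each case, and it then argues set-theoretically rather than by enumerating coefficient patterns on atoms. Your atom/pattern enumeration is a legitimate alternative organization, and your three admissible patterns $(1,1,1,-2)$, $(1,1,-2,-1)$, $(1,1,-1,-1)$ are indeed the correct outcome of such a sweep.

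Two points in your outline need shoring up. First, the sweep itself cannot be completed with relevance alone: for instance the pattern $(2,1,-2,-1)$ admits three independent zero-sum atom types, namely $\{1,3\},\{2,4\},\{1,2,3,4\}$, and is excluded only because it forces two of the four sets to coincide; so distinctness of the circuit elements must be invoked explicitly in the enumeration. Second, the reconciliation with the binary-order convention is needed in more places than you flag. In the $(1,1,-2,-1)$ case you must still argue that the maximum of the family is $B_1\triangle B_2$ and not $B_1$ or $B_2$; this follows from relevance, since $B_1\cap B_2$ and $B_1\triangle B_2$ are disjoint and would otherwise both be non-maximal. In the rectangle case the maximum need not lie in the pair $\{B_3,B_4\}$ to which you assign negative coefficients --- it can be $B_1$ or $B_2$ --- so your case distinction on whether $X$ contains the largest element of $A_1\triangle A_3$ does not cover everything; the clean fix is to let the opposite pair \emph{not} containing the maximum play the role of $\{A_1,A_3\}$ (re-verifying the incomparability in $(\star)$, which relevance guarantees), or equivalently to replace $X$ by $(A_1\triangle A_3)\setminus X$ when needed. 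The paper avoids all of this bookkeeping by fixing $A_4$ as the maximum before choosing signs. Finally, a small slip: your justification that the three non-maximal sets are pairwise intersecting ("each contains a piece of $A_1\cap A_3$") fails for type (i), where $A_1\triangle A_3$ is disjoint from $A_1\cap A_3$; there the intersections are $A_1\setminus A_3$ and $A_3\setminus A_1$, which $(\star)$ guarantees are non-empty.
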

Before proving this proposition, we give examples for each such type of circuit of cardinality four.

\begin{exmp}\label{ex:circuits}
	Consider the following families in the arrangement $\A_4$ corresponding to the cases of \Cref{prop:circuits}.
	\begin{enumerate}[(i)]
		\item The family $\{H_{\{1,2\}},H_{\{1,3\}},H_{\{2,3\}},H_{\{1,2,3\}}\}$ is a circuit of $\A_4$ since there is the relation $\chi_{\{1,2\}}+\chi_{\{1,3\}} + \chi_{\{2,3\}} = 2 \chi_{\{1,2,3\}}$.
		\item The family $\{H_{\{1,2\}},H_{\{1,3\}},H_{\{1\}},H_{\{2,3\}}\}$ is a circuit of $\A_4$ since there is the relation $\chi_{\{1,2\}}+\chi_{\{1,3\}} = 2\chi_{\{1\}} +  \chi_{\{2,3\}}$.
		\item The family $\{H_{\{1,2\}},H_{\{1,3\}},H_{\{1\}},H_{\{12,3\}}\}$ is a circuit of $\A_4$ since there is the relation $\chi_{\{1,2\}}+\chi_{\{1,3\}} = \chi_{\{1\}} +  \chi_{\{1,2,3\}}$.
		\item Setting $A_1\coloneqq \{2,4\},A_3\coloneqq \{1,3,4\}$ and $X\coloneqq \{1\}$ yields the family $\{H_{\{2,4\}},H_{\{1,3,4\}},\allowbreak H_{\{1,4\}},H_{\{2,3,4\}}\}$. This is a circuit of $\A_4$ since there is the relation $\chi_{\{2,4\}}+\chi_{\{1,3,4\}} = \chi_{\{1,4\}} +  \chi_{\{2,3,4\}}$.
	\end{enumerate}
\end{exmp}

\begin{proof}[Proof of \Cref{prop:circuits}]
	Generalizing the relations given in \Cref{ex:circuits} to arbitrary sets $A_1,A_3,X$ satisfying the conditions in \Cref{eq:sets} shows that these given families are indeed families of four different subsets of $\left[n\right]$ which form relevant circuits in $\A_n$.
	
	Conversely, let $\{A_1,\dots,A_4\}$ be a family of subsets corresponding to a relevant circuit in $\A_n$ with $A_i\neq A_j$ for any $i\neq j$, $A_i\cap A_j\neq \emptyset$ for $1\le i,j\le 3$ and $A_4$ is the maximal element in the ordering $<$.
	Since the hyperplanes form a circuit in $\A_n$ there is a relation $\sum_{i=1}^4\lambda_i\chi_{A_i}=0$ for some $\lambda_i\in\Z$ for $1\le i \le 4$.
	The coefficients $\lambda_i$ need to be non-zero since the circuit would otherwise satisfy a dependency of cardinality less than four.
	
	Using the symmetry of the sets $A_1,\dots,A_3$ it suffices to consider the two cases $\lambda_1,\lambda_2,\lambda_3>0$ and $\lambda_4<0$ or $\lambda_1,\lambda_3>0$ and $\lambda_2,\lambda_4<0$.
	Note, that the case $\lambda_1>0$ and $\lambda_2,\lambda_3,\lambda_4<0$ cannot occur since $A_4$ is the maximal element.
	
	\begin{description}
		\item[Case 1: $\lambda_1,\lambda_2,\lambda_3>0$ and $\lambda_4<0$]
		In this case, the relation implies $A_1\cup A_2\cup A_3=A_4$.
		Since the sets $A_1,A_2,A_3$ are by assumption pairwise intersecting every element in $A_4$ is contained in at least two of the sets $A_1,A_2,A_3$.
		Not all elements of $A_4$ appear in all of the sets $A_1,A_2,A_3$ since otherwise these four sets would all be equal.
		Hence, the relation then implies that every element in $A_4$ is contained in exactly two of the sets $A_1,A_2,A_3$ which means that we can without loss of generality assume $A_2=A_1\triangle A_3$.
		Therefore, the family is a circuit of type $(i)$.
		
		\item[Case 2: $\lambda_1,\lambda_3>0$ and $\lambda_2,\lambda_4<0$]
		Analogously to the first case, the relation  now yields $A_1\cup A_3= A_2\cup A_4$.
		Hence, the maximality of $A_4$ yields $A_1\not \subseteq A_3$ and $A_1\not \supseteq A_3$.
		Thus, the elements in $A_1\cup A_3$ are partitioned into the three blocks $A_1\setminus A_3, A_3\setminus A_1$ and $A_1\cap A_3$ appearing with positive coefficients $\lambda_1,\lambda_3$ and $\lambda_1+\lambda_3$ respectively in the relation.
		
		Assume there is an element $a\in (A_1\cup A_3)\setminus A_2$.
		Then, $a\in A_4$ which implies $\lambda_4=\lambda_1+\lambda_3$ since $a\not \in A_2$.
		This yields $A_4 \subseteq A_1\cap A_3$ which contradicts the maximality of $A_4$.
		Therefore, we must have $A_1\cap A_3\subseteq A_2$ and it suffices to consider the following two subcases:
		\begin{description}
			\item[Case 2.1: $A_1 \cap A_3=A_2$]
			Then we obtain $A_1\triangle A_3 \subseteq A_4$.
			Since the positive coefficients in the relation are constant on the block $A_1 \cap A_3$ we must have either $A_1\triangle A_3 = A_4$ or  $A_1\cup A_3 = A_4$.
			The former case yields a circuit of type $(ii)$ and the latter one of type $(iii)$ as described in the statement of \Cref{prop:circuits}.
			
			\item[Case 2.2: $A_1 \cap A_3\subsetneq A_2$]
			Assume $(A_1 \cap A_3)\cup X = A_2$ for some non-empty subset $X\subseteq A_1\triangle A_3$.
			Now, we must have $A_4 \supseteq (A_1\triangle A_3)\setminus X$ since $A_1\cup A_3= A_2\cup A_4$.
			Since $X\subseteq A_1\triangle A_3$, the coefficient $\lambda_2$ can be at most $\lambda_1$ or $\lambda_3$.
			However, the positive coefficient of the elements in $A_1 \cap A_3$ is $\lambda_1 +\lambda_3$.
			Hence, $A_4 \supseteq (A_1\cap A_3)$.
			So in total $A_4 \supseteq (A_1\cup A_3)\setminus X$.
			Since the positive coefficients of the elements in $A_1 \cap A_3$ and $A_1 \triangle A_3$ are different we must have $A_4\cap X = \emptyset$.
			Therefore, $A_4 = (A_1\cup A_3)\setminus X$ and the circuit is of type $(iv)$.
			\qedhere
		\end{description}
	\end{description}
\end{proof}

\Cref{prop:circuits} implies that all broken circuits of $\A_n$ of cardinality three are of the form $\{H_{A_1},H_{A_3},H_{A_1\triangle A_3}\}$ or $\{H_{A_1},H_{A_3},H_{(A_1\cap A_3)\cup X}\}$ for $A_1,A_3,X\subseteq \left[ n \right] $ with $A_1\cap A_3\neq \emptyset$, $A_1\not \subseteq A_3$, $A_1\not \supseteq A_3$ and $X\subseteq A_1\triangle A_3$.
The former ones correspond to circuits of type~$(i)$ with the relation $\chi_{\{A_1\}}+\chi_{\{A_2\}} + \chi_{\{A_3\}} = 2 \chi_{\{A_4\}}$.
We call them \emph{tetrahedron circuits} since they exhibit a tetrahedron if we regard the elements as vertices of the $n$-dimensional hypercube.

The latter broken circuits might not stem from a unique circuit of cardinality four.
We can however fix a bijection between these broken circuits and the circuits of type $(iii)$ and~$(iv)$ in \Cref{prop:circuits}.
These all satisfy the relation $\chi_{\{A_1\}} + \chi_{\{A_3\}} = \chi_{\{A_2\}}+ \chi_{\{A_4\}}$.
The characteristic functions of these circuits viewed in the $n$-dimensional hypercube form rectangles which is why we call these circuit \emph{rectangle circuits} in the following.

Using again \Cref{thm:betti_bc} to determine $b_3(\A_n)$ we will therefore start from \Cref{thm:intersecting_families} and subtract the number of tetrahedron and rectangle circuits which give broken circuits of cardinality three by removing the largest element in each circuit.
Note that a broken circuit can not stem from a tetrahedron and rectangle circuit simultaneously since it can not satisfy a tetrahedron and a rectangle relation at the same time.

\begin{prop}\label{prop:tetrahedra}
	For any $n\ge 1$ there are $S(n+1,4)$ tetrahedron circuits in $\A_n$.
\end{prop}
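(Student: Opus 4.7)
The plan is to set up an explicit bijection between tetrahedron circuits of $\A_n$ and partitions of $[n+1]$ into four non-empty blocks, which by definition are counted by $S(n+1,4)$.

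First, I would rewrite a tetrahedron circuit in a symmetric form. By type $(i)$ of \Cref{prop:circuits}, a tetrahedron circuit is determined by $A_1,A_3\subseteq[n]$ with $A_1\cap A_3\neq\emptyset$, $A_1\setminus A_3\neq\emptyset$, $A_3\setminus A_1\neq\emptyset$, and consists of $\{H_{A_1},H_{A_3},H_{A_1\triangle A_3},H_{A_1\cup A_3}\}$. Setting
\[
B_1:=A_1\cap A_3,\qquad B_2:=A_1\setminus A_3,\qquad B_3:=A_3\setminus A_1,
\]
one has three pairwise disjoint non-empty subsets of $[n]$, and the four hyperplanes of the circuit are precisely $H_{B_1\cup B_2}$, $H_{B_1\cup B_3}$, $H_{B_2\cup B_3}$, and the maximal element $H_{B_1\cup B_2\cup B_3}$. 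The three smaller sets are symmetric in the $B_\ell$, so the unordered tetrahedron circuit depends only on the unordered triple $\{B_1,B_2,B_3\}$, and conversely any such triple of pairwise disjoint non-empty subsets of $[n]$ produces a tetrahedron circuit. Hence the count of tetrahedron circuits equals the count of unordered triples of pairwise disjoint non-empty subsets of $[n]$.

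Second, I would match such triples with set partitions of $[n+1]$ into four blocks via the map
\[
\{B_1,B_2,B_3\}\longmapsto \bigl\{B_1,\,B_2,\,B_3,\,([n]\setminus(B_1\cup B_2\cup B_3))\cup\{n+1\}\bigr\}.
\]
The inserted element $n+1$ ensures that the fourth block is non-empty even when $B_1\cup B_2\cup B_3=[n]$. The inverse is immediate: from a partition of $[n+1]$ into four non-empty blocks, the block containing $n+1$ is canonically singled out, and the remaining three blocks form the desired unordered triple of disjoint non-empty subsets of $[n]$. This bijection gives exactly $S(n+1,4)$ tetrahedron circuits.

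The argument is essentially without obstacle; the only subtle point is arranging the auxiliary element $n+1$ so that a possibly empty complement in $[n]$ becomes a legitimate fourth block. In the language of \Cref{sec:stirling}, this proposition may equivalently be phrased as the statement that among the six $(3,4)$-prototypes, exactly the one sending $1\mapsto\{1,3\},\,2\mapsto\{1,2\},\,3\mapsto\{2,3\}$ (together with its $3!$ relabellings, all producing the same unordered circuit) encodes the tetrahedron relation, confirming the contribution $S(n+1,4)$.
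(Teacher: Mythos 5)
Your proof is correct and essentially the same as the paper's: your blocks $B_1=A_1\cap A_3$, $B_2=A_1\setminus A_3$, $B_3=A_3\setminus A_1$ are exactly the paper's partition blocks $P_i=A_4\setminus A_i$, and both arguments complete the bijection by adjoining the block $([n]\setminus(B_1\cup B_2\cup B_3))\cup\{n+1\}$, so the count is $S(n+1,4)$ in either formulation. The only blemish is the closing aside: there are far more than six injective $(3,4)$-prototypes, and what you mean is that the $3!$ relabellings of the single prototype you describe are precisely the ones encoding the tetrahedron relation --- but this remark is not needed for the proof and does not affect its validity.
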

\begin{proof}
	Let $P_1,P_2,P_3,P_4$ be any partition of $\left[ n+1\right]$ where we label the parts so that  $n+1\in P_4$.
	Set $A_4 \coloneqq \left[ n+1\right] \setminus P_4$ and $A_i \coloneqq A_4 \setminus P_i$ for $1\le i \le 3$.
	
	We claim that the hyperplanes corresponding to $A_1,\dots,A_4$ form a tetrahedron circuit in~$\A_n$.
	By definition we have $P_k=A_i\cap A_j$ for any possible ordering $\{k,i,j\}=\{1,2,3\}$ and $A_i\subset A_4$ for all $1\le i \le 3$
	Hence, the family $A_1,\dots,A_4$ is pairwise intersecting, i.e. $A_i\cap A_j \neq \emptyset$ for all $i\neq j$.
	Next, consider $l\in A_4$ such that $l\in P_i$ for some $1\le i \le 3$ and set $\{j,k\}\coloneqq \{1,2,3\}\setminus \{i\}$.
	Then, we conclude that $l\in A_j, l\in A_k$ and $l\not\in A_i$ which implies that  $A_1,\dots,A_4$ corresponds to a tetrahedron circuit.
	
	Conversely, given the subsets $A_1,\dots, A_4$ of $\left[ n\right]$ corresponding to a tetrahedron circuit with largest subset $A_4$ we can define a partition of $\left[ n+1\right]$ by setting $P_4 \coloneqq \left[ n+1\right]\setminus A_4$ and $P_i \coloneqq A_4 \setminus A_i$ for $1\le i \le 3$.
	We claim this defines a partition of $\left[ n+1\right]$.
	By definition we have $P_i\cap P_4=\emptyset$ for all $1\le i\le 3$.
	The assumption of $A_1,\dots, A_4$ corresponding to a tetrahedron circuit implies that every $l\in A_4$ is contained in exactly two subsets $A_k,A_j$ for some $1\le k <j\le3$.
	This implies that every $l\in A_4$ is contained in exactly one block $P_i$ which proves that $P_1,\dots,P_4$ is a partition of $\left[ n+1\right]$.
	
	Since these two constructions are inverse to each other the claim follows.
\end{proof}

To count the rectangle circuits we construct corresponding tuples which will be easier to count.
Throughout the subsequent discussion we regard the indices cyclically, i.e.\ given any family of sets $X_1\dots X_n$ we set $X_0\coloneqq X_n$ and $X_{n+1}\coloneqq X_1$.

\begin{prop}\label{prop:vertices_sides}
	Let $(A_1,\dots,A_4)$ be a family of distinct and non-empty subsets of $\left[ n\right]$ forming a relevant rectangle circuit, i.e.\ $\chi_{A_1}+\chi_{A_3}= \chi_{A_2}+ \chi_{A_4}$ and $A_i\cap A_j \neq \emptyset$ for $1\le i < j \le 3$ with maximal element $A_4$.
	Then ,we define its \emph{midpoint} as $M\coloneqq \bigcap_{i=1}^4 A_i$ and the \emph{sides} of the rectangle as $S_i\coloneqq (A_i \cap A_{i+1}) \setminus M$ for $1\le i \le 4$. 
	
	In this case, the tuple $(S_1,\dots,S_4,M)$ satisfies
	\begin{enumerate}
		\item[$(S1)$] $S_i\cap S_j = \emptyset$ for all $i\neq j$ and in particular $S_i\neq S_j$ for all $i\neq j$,
		\item[$(S2)$]  $M\cap S_i=\emptyset$ for all $1\le i \le 4$,
		\item[$(S3)$] $M\neq \emptyset$, and
		\item[$(S4)$] at most one of two opposite sides are empty.
	\end{enumerate}
	We will call a tuple $(S_1,\dots,S_4,M)$ satisfying $(S1)$ to $(S4)$ a \emph{side-midpoint tuple}.
\end{prop}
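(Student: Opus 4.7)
The plan is to analyze the relation $\chi_{A_1}+\chi_{A_3}=\chi_{A_2}+\chi_{A_4}$ pointwise on $[n]$. For every $x\in[n]$, write $(a,b,c,d)=(\chi_{A_1}(x),\chi_{A_2}(x),\chi_{A_3}(x),\chi_{A_4}(x))\in\{0,1\}^4$ and observe that $a+c=b+d$ forces $a+c\in\{0,1,2\}$. The case $a+c=0$ means $x$ lies in none of the $A_i$; the case $a+c=2$ means $x\in M$; the case $a+c=1$ forces exactly one of $\{a,c\}$ and exactly one of $\{b,d\}$ to equal $1$, so $x$ lies in exactly two of the $A_i$, and these two must be \emph{cyclically consecutive}, i.e.\ $x$ belongs to exactly one of the four pairwise intersections $A_1\cap A_2$, $A_2\cap A_3$, $A_3\cap A_4$, $A_4\cap A_1$. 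Thus $[n]$ decomposes as the disjoint union of the set of $x$'s with $a+c=0$, the midpoint $M$, and the four sides $S_1,\dots,S_4$.

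From this disjoint decomposition, $(S1)$ and $(S2)$ follow immediately. For $(S3)$, I use the pairwise-intersection hypothesis: pick any $x\in A_1\cap A_3$. Then $a=c=1$, so $a+c=2$, forcing $b=d=1$, hence $x\in M$; this gives $A_1\cap A_3\subseteq M$, and since the former is nonempty by the relevance assumption, $M\neq\emptyset$.

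For $(S4)$, the key observation is the explicit reconstruction
\begin{align*}
A_1 &= M\cup S_4\cup S_1, & A_2 &= M\cup S_1\cup S_2,\\
A_3 &= M\cup S_2\cup S_3, & A_4 &= M\cup S_3\cup S_4,
\end{align*}
which one reads off from the pointwise analysis (each element of $A_i$ has $\chi_{A_i}(x)=1$, and the decomposition above tracks which $A_i$ contain $x$). If both $S_1$ and $S_3$ were empty, the formulas would give $A_1=A_4=M\cup S_4$ and $A_2=A_3=M\cup S_2$, contradicting the distinctness of the $A_i$. Similarly, if $S_2=S_4=\emptyset$, then $A_1=A_2$ and $A_3=A_4$, again impossible. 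Hence at most one of each opposite pair $\{S_1,S_3\}$, $\{S_2,S_4\}$ can be empty, which is $(S4)$.

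The only mildly subtle step is the case analysis in the first paragraph: one must be careful to notice that $a+c=1=b+d$ rules out configurations where the two chosen sets are $\{A_1,A_3\}$ or $\{A_2,A_4\}$ alone (both non-consecutive pairs are excluded by the parity structure of the relation), leaving exactly the four consecutive pairs indexed by $S_1,\dots,S_4$. Once this pointwise classification is in place, $(S1)$–$(S4)$ are essentially bookkeeping consequences of the decomposition together with the hypotheses $A_1\cap A_3\neq\emptyset$ and distinctness of the $A_i$.
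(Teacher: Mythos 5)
Your proof is correct. The underlying mechanism is the same as in the paper---a pointwise parity analysis of the relation $\chi_{A_1}+\chi_{A_3}=\chi_{A_2}+\chi_{A_4}$---but you organize it differently: you first classify every $x\in[n]$ by its membership vector, obtaining in one stroke the disjoint decomposition of $[n]$ into $M$, the four sides $S_1,\dots,S_4$, and the set of elements in no $A_i$, together with the reconstruction formulas $A_i=M\cup S_{i-1}\cup S_i$. From this, $(S1)$ and $(S2)$ are immediate, $(S3)$ follows as in the paper from $A_1\cap A_3\subseteq M$ (the paper phrases it as $A_1\cap A_3=A_2\cap A_4=M$), and $(S4)$ becomes a one-line consequence of the distinctness of the $A_i$. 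The paper instead proves $(S1)$ by a local parity contradiction and handles $(S4)$ more laboriously: assuming $S_1=S_3=\emptyset$ it writes $A_1=M\cup S_4\cup\widetilde{A_1}$ for an auxiliary set $\widetilde{A_1}$, shows $\widetilde{A_1}=\emptyset$ using $A_1\cup A_3=A_2\cup A_4$, and derives the contradiction $A_1=A_4$. Your route buys a cleaner $(S4)$, and your reconstruction formulas are exactly what the paper establishes only later (in \Cref{prop:sides_vertices} and the proof of \Cref{prop:inverses}), so your argument in effect also anticipates the bijectivity of the side-midpoint correspondence. I see no gaps; the one point you rightly flag---that $a+c=1=b+d$ forces the two sets containing $x$ to be a cyclically consecutive pair, never $\{A_1,A_3\}$ or $\{A_2,A_4\}$---is handled correctly.
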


\begin{exmp}
	 \Cref{fig:rect_circuits} depicts the general case of a rectangle circuit together with its  corresponding side-midpoint tuples as defined in \Cref{prop:vertices_sides} and two examples in~$\A_5$.
	
	\begin{figure}[ht!]
				\begin{subfigure}[b]{0.32\linewidth}
			\centering
			\includegraphics[scale=.45]{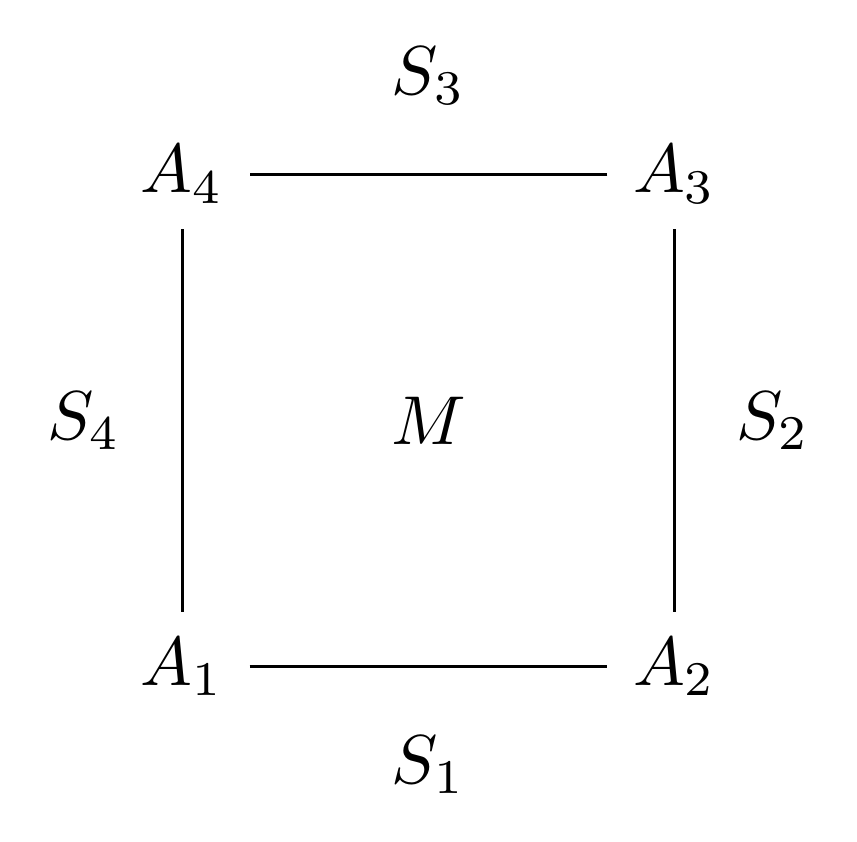}
			\label{fig:ex_general}
		\end{subfigure}
		\begin{subfigure}[b]{0.32\linewidth}
			\centering
			\includegraphics[scale=.45]{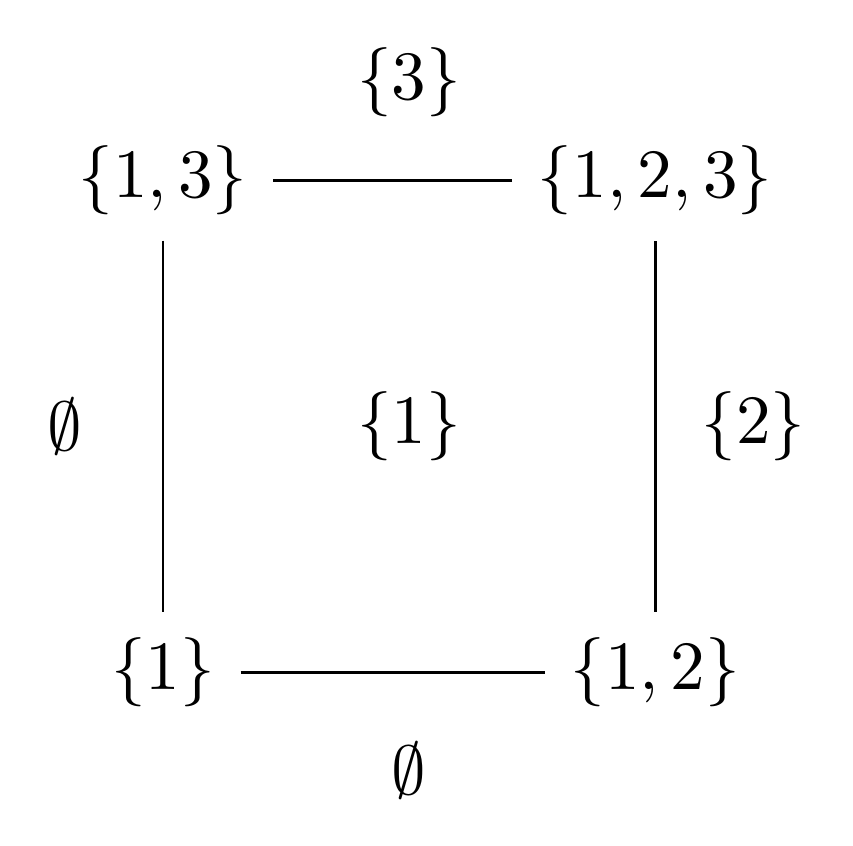}
			\label{fig:ex1}
		\end{subfigure}
	\begin{subfigure}[b]{0.32\linewidth}
		\centering
		\includegraphics[scale=.45]{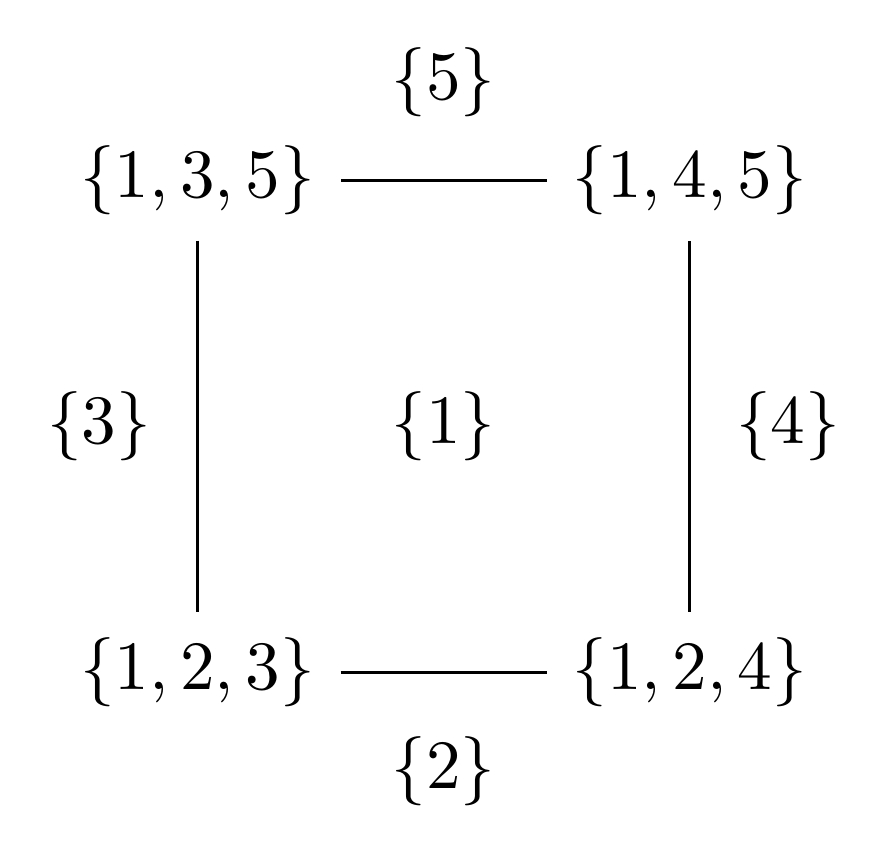}
		\label{fig:rex3}
	\end{subfigure}
		\caption{Three examples of rectangle circuits together with their side-midpoint tuples.}
		\label{fig:rect_circuits}
	\end{figure}

\end{exmp}
\begin{proof}[Proof of \Cref{prop:vertices_sides}]
	To prove $(S1)$ assume for a contradiction $a\in S_i\cap S_j$.
	Without loss of generality we can assume $a\in S_1\cap S_2$.
	By definition this yields $a\in A_1,A_2,A_3$ but $a\not\in M$.
	Thus $a\not \in A_4$.
	This contradicts the relation $\chi_{\{A_1\}} + \chi_{\{A_3\}} = \chi_{\{A_2\}}+ \chi_{\{A_4\}}$ in the element $a$.
	Thus, $S_i\cap S_j=\emptyset$ for all $i\neq j$.
	
	The sides $S_i$ are defined as $S_i\coloneqq (A_i \cap A_{i+1}) \setminus M$.
	This immediately implies property $(S2)$ namely $S_i\cap M=\emptyset$.
	
	By assumption, we have $A_1\cap A_3 \neq \emptyset$.
	The relation $\chi_{\{A_1\}} + \chi_{\{A_3\}} = \chi_{\{A_2\}}+ \chi_{\{A_4\}}$ then yields $A_1\cap A_3=A_2\cap A_4$.
	Therefore, $A_1\cap A_3=M\neq \emptyset$ which proves property $(S3)$.
	
	Lastly, assume without loss of generality $S_1=S_3=\emptyset$.
	This implies $A_1=M\cup S_4 \cup \widetilde{A_1}$ for some $\widetilde{A_1}\subseteq \left[n \right]$ disjoint from $M$ and $S_4$.
	This yields $\widetilde{A_1}\cap A_4 = \emptyset$ since any intersection of these sets disjoint from $M$ would be contained in $S_4$.
	Hence using the fact $A_1\cup A_3= A_2\cup A_4$, we obtain $\widetilde{A_1}\subseteq A_2$.
	Thus, \[\widetilde{A_1}\subseteq (A_1\cup A_2)\setminus M= S_1=\emptyset.\] 
	Hence, $\widetilde{A_1}=\emptyset $ and $A_1=M\cup S_4$.
	Analogously, we obtain $A_4=M\cup S_4$ which contradicts $A_1\neq A_4$.
\end{proof}

The next proposition shows that we can obtain a rectangle circuit from a side-midpoint tuple:

\begin{prop}\label{prop:sides_vertices}
	Let $(S_1,\dots,S_4,M)$ be a side-midpoint tuple.
	Set $A_i \coloneqq M \cup S_{i-1} \cup S_{i}$.
	Then, the family $(A_1,\dots,A_4)$ corresponds to a relevant rectangular circuit which means it satisfies
	\begin{enumerate}
		\item[$(C1)$] $A_i\neq A_j$ for all $i\neq j$,
		\item[$(C2)$] $A_i\neq\emptyset$ for all $1\le i \le 4$,
		\item[$(C3)$] $A_i\cap A_j\neq \emptyset$ for all $i\neq j$ and
		\item[$(C4)$] it forms a rectangle circuit, i.e.\ $\chi_{A_1}+\chi_{A_3}= \chi_{A_2}+ \chi_{A_4}$.
	\end{enumerate}
\end{prop}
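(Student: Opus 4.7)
The plan is to verify the four conditions in turn, with $(C2)$ and $(C3)$ being immediate from $(S3)$, $(C4)$ reducing to an element-by-element count using the disjointness from $(S1)$ and $(S2)$, and $(C1)$ being the only point that genuinely uses $(S4)$.

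First, for $(C2)$ and $(C3)$: by construction every $A_i$ contains $M$, and $M \ne \emptyset$ by $(S3)$. Hence each $A_i$ is non-empty, and $A_i \cap A_j \supseteq M \ne \emptyset$ for every pair $i \ne j$.

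Next, for $(C4)$, I would evaluate $\chi_{A_1}+\chi_{A_3}$ and $\chi_{A_2}+\chi_{A_4}$ at an arbitrary element. By $(S1)$ and $(S2)$ the sets $M, S_1, S_2, S_3, S_4$ are pairwise disjoint, so every element of $A_1 \cup \cdots \cup A_4$ lies in exactly one of them. An element of $M$ belongs to all four $A_i$ and so contributes $2$ to both sums. An element of $S_i$ lies in exactly $A_i$ and $A_{i+1}$ (cyclically), that is, in exactly one odd-indexed and one even-indexed $A_j$, contributing $1$ to each sum. Any element not in $M \cup S_1 \cup \cdots \cup S_4$ contributes $0$ on both sides. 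Summing gives $\chi_{A_1}+\chi_{A_3}=\chi_{A_2}+\chi_{A_4}$.

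Finally, for $(C1)$, I would show each of the six possible equalities $A_i = A_j$ fails. For opposite indices, $A_1 = A_3$ would give $M \cup S_4 \cup S_1 = M \cup S_2 \cup S_3$; removing $M$ (which is disjoint from each $S_i$ by $(S2)$) and using the disjointness of the $S_i$ from $(S1)$ forces $S_1 = S_2 = S_3 = S_4 = \emptyset$, contradicting $(S4)$. The same argument rules out $A_2 = A_4$. For adjacent indices, say $A_1 = A_2$, the sets share $M \cup S_1$ and differ only in whether $S_4$ or $S_2$ is adjoined; disjointness then forces $S_2 = S_4 = \emptyset$, which is exactly a pair of opposite sides and is forbidden by $(S4)$. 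The remaining three adjacent pairs are handled identically by cyclic symmetry.

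The only genuinely delicate step is $(C1)$, and the condition $(S4)$ is precisely what is needed: equalities among the $A_i$ always translate, via the disjointness supplied by $(S1)$ and $(S2)$, into a pair of opposite sides vanishing, which is exactly what $(S4)$ rules out.
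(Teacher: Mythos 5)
Your proposal is correct and follows essentially the same route as the paper: $(C2)$ and $(C3)$ from $M\subseteq A_i$ and $(S3)$, $(C4)$ from the pairwise disjointness of $S_1,\dots,S_4,M$, and $(C1)$ by splitting into adjacent and opposite pairs and reducing each equality to a pair of opposite sides being empty, contradicting $(S4)$. Your element-by-element verification of $(C4)$ and the explicit treatment of the opposite-index case are just slightly more detailed renderings of the paper's argument, not a different method.
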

\begin{proof}
	Assume $A_1=A_2$.
	This implies $M\cup S_{4}\cup S_1=M\cup S_{1}\cup S_2$.
	Hence, $S_4=S_2$.
	By assumption $(S1)$ these sets are disjoint which yields $S_4=S_2=\emptyset$.
	This contradicts the assumption $(S_4)$ that at most one of two opposite sets is empty.
	Now assume $A_1=A_3$.
	This implies $M\cup S_{4}\cup S_1=M\cup S_{2}\cup S_3$.
	Thus, we have two partitions of the same set by pairwise disjoint sets which can not all be empty which is impossible.
	Thus we have without loss of generality proven $(C1)$.
	
	By assumption we have $M\neq \emptyset$.
	Our construction of the sets $A_i$ yields  $M\subseteq A_i$ for all $1\le i\le 4$.
	This immediately implies $A_i\neq \emptyset$ for all $1\le i\le 4$ and $A_i\cap A_j\neq \emptyset$ for all $i\neq j$.
	Hence, properties $(C2)$ and $(C3)$ hold.
	
	Lastly, we have by construction of the sets $A_i$ and due to the fact that the sets $S_1,\dots,S_4,M$ are pairwise disjoint 
	\[
		\chi_{A_1}+\chi_{A_3}=\chi_M+\sum_{i=0}^4\chi_{S_i}=\chi_{A_2}+ \chi_{A_4}.\qedhere
	\]
\end{proof}

\begin{prop}\label{prop:inverses}
	The constructions defined in \Cref{prop:vertices_sides,prop:sides_vertices} are inverse to each other.
\end{prop}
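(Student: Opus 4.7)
The plan is to check that composing the two constructions in either order yields the identity, so the bijection is established.

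For the first composition, I would start with a relevant rectangle circuit $(A_1,\dots,A_4)$, define $M$ and $S_1,\dots,S_4$ as in \Cref{prop:vertices_sides}, and then form $\widetilde{A_i}:=M\cup S_{i-1}\cup S_i$. The inclusion $\widetilde{A_i}\subseteq A_i$ is immediate from the definitions of $M$ and the $S_j$, since $M\subseteq A_i$ and both $S_{i-1}=(A_{i-1}\cap A_i)\setminus M$ and $S_i=(A_i\cap A_{i+1})\setminus M$ are contained in $A_i$. For the reverse inclusion, I would take $a\in A_1$ and evaluate the rectangle relation $\chi_{A_1}+\chi_{A_3}=\chi_{A_2}+\chi_{A_4}$ at $a$: the left-hand side equals $1+\chi_{A_3}(a)$. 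If $a\in A_3$ the right-hand side must be $2$, forcing $a\in A_2\cap A_4$ and hence $a\in M$. If $a\notin A_3$, then exactly one of $\chi_{A_2}(a)$ and $\chi_{A_4}(a)$ is $1$; in the first case $a\in (A_1\cap A_2)\setminus M=S_1$ and in the second case $a\in (A_4\cap A_1)\setminus M=S_4$. Either way $a\in M\cup S_4\cup S_1=\widetilde{A_1}$, and the other three equalities follow by the cyclic symmetry of the rectangle relation under simultaneous index-shift.

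For the second composition, I would start with a side-midpoint tuple $(S_1,\dots,S_4,M)$, set $A_i:=M\cup S_{i-1}\cup S_i$, and then recompute $\widetilde{M}:=\bigcap_{i=1}^4 A_i$ and $\widetilde{S_i}:=(A_i\cap A_{i+1})\setminus\widetilde{M}$. Each $A_i$ contains $M$, so $M\subseteq\widetilde{M}$. For the reverse inclusion, any $a\in\widetilde{M}\setminus M$ would have to lie in each of the four unions $S_{i-1}\cup S_i$; by property $(S1)$ the $S_j$ are pairwise disjoint, so $a$ could belong to at most one $S_j$, which cannot simultaneously meet all four consecutive unions. Hence $\widetilde{M}=M$. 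For the sides, the set-theoretic identity $(M\cup X)\cap (M\cup Y)=M\cup (X\cap Y)$ combined with pairwise disjointness of the $S_j$ yields $A_i\cap A_{i+1}=M\cup\bigl((S_{i-1}\cup S_i)\cap (S_i\cup S_{i+1})\bigr)=M\cup S_i$, and subtracting $M$, which is disjoint from $S_i$ by $(S2)$, recovers $\widetilde{S_i}=S_i$.

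I do not expect a deep obstacle; the argument reduces to elementary set manipulations together with a careful case analysis of the rectangle relation. The one delicate point is the cyclic bookkeeping of indices and the use of the relation in the first composition to show that an element of $A_i$ lying outside the midpoint must sit in exactly one of the two adjacent sides, so that $A_i$ really decomposes as the disjoint union $M\sqcup S_{i-1}\sqcup S_i$.
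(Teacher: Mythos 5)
Your proof is correct and follows essentially the same route as the paper: in one direction you use the rectangle relation $\chi_{A_1}+\chi_{A_3}=\chi_{A_2}+\chi_{A_4}$ to show each vertex decomposes as $A_i=M\cup S_{i-1}\cup S_i$, and in the other you use the pairwise disjointness of $S_1,\dots,S_4,M$ to recover the side-midpoint tuple. Your pointwise case analysis and the explicit verification that $\widetilde{M}=M$ and $\widetilde{S_i}=S_i$ are just a more detailed rendering of the paper's (rather terse) argument, so there is nothing to fix.
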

\begin{proof}
	Let $(A_1,\dots,A_4)$ be the vertices of a relevant rectangle circuit satisfying $(C1)$ to $(C4)$.
	This yields by \Cref{prop:vertices_sides} the side-midpoint tuple with midpoint $M_A
	\coloneqq \bigcup_{i=1}^4 A_i$ and sides $(A_{i-1}\cap A_i)\setminus M_A$.
	Fix some $1\le i \le 4$.
	The relation in property $(C4)$ then implies $A_i \subseteq A_{i-1}\cup A_{i+1}$.
	Hence, we obtain $A_i = (A_{i-1}\cup A_i) \cup (A_{i}\cup A_{i+1})$.
	This yields,
	\[
		A_i = M_A \cup ((A_{i-1}\cup A_i)\setminus M_A) \cup ((A_{i}\cup A_{i+1})\setminus M_A).
	\]
	Thus, the vertices $A_i$ equal the resulting vertices from the construction in \Cref{prop:sides_vertices}.

	Conversely, let $(S_1,\dots,S_4,M)$ be a side-midpoint tuple.
	This yields by \Cref{prop:sides_vertices} the vertices of a rectangle circuit $M\cup S_{i-1}\cup S_i$ for $1\le i \le 4$.
	Since the sets $S_1,\dots,S_4,M$ are pairwise disjoint the construction of \Cref{prop:vertices_sides} applied to these vertices yields the side-midpoint tuple $(S_1,\dots,S_4,M)$.
\end{proof}

In total we have established a bijection between relevant rectangle circuits and side-midpoint tuples.
The former correspond to broken circuits of $\A_n$ of the form $\{H_{A_1},H_{A_3},\allowbreak H_{(A_1\cap A_3)\cup X}\}$ for $A_1,A_3,X\subseteq \left[ n \right] $ with $A_1\cap A_3\neq \emptyset$, $A_1\not \subseteq A_3$, $A_1\not \supseteq A_3$ and $X\subseteq A_1\triangle A_3$.
We are now able to determine the number of these broken circuits by counting side-midpoint tuples.

\begin{prop}\label{theo:rectangle_circuits}
	For any $n\ge 1$ there are $3S(n+1,4)+12S(n+1,5) + 15 S(n+1,6)$ side-midpoint tuples in $\left[n\right]$.
	This number equals the relevant rectangle circuits in $\A_n$.
\end{prop}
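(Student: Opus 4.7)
The plan is to use the bijection between side-midpoint tuples and relevant rectangle circuits established in the preceding propositions, and to split into cases by the number $j$ of non-empty sides among $S_1,S_2,S_3,S_4$. Conditions $(S3)$ and $(S4)$ restrict $j \in \{2,3,4\}$; in the $j=2$ case the two non-empty sides must sit in adjacent (not opposite) cyclic positions, since otherwise an entire opposite pair would be empty.

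In each case the non-empty entries of $(M,S_1,\ldots,S_4)$ are pairwise disjoint non-empty subsets of $[n]$. I would adjoin $n+1$ to the residue $[n]\setminus(M\cup S_1\cup\cdots\cup S_4)$, producing a partition of $[n+1]$ into $j+2$ non-empty blocks counted by $S(n+1,j+2)$, in which the block containing $n+1$ is automatically the residue. The remaining $j+1$ blocks must then be labeled: one as $M$, and the other $j$ arranged together with $4-j$ empty placeholders in the cyclic positions $(S_1,S_2,S_3,S_4)$, modulo the natural dihedral symmetry of order $8$ of the $4$-cycle, which identifies labelings giving the same unordered rectangle circuit under the bijection. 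For $j=4$ this yields $5\cdot (4!/8) = 15$ configurations per $S(n+1,6)$, contributing $15\,S(n+1,6)$. For $j=3$ one obtains $4\cdot (4!/8) = 12$ per $S(n+1,5)$, since the lone empty placeholder is distinguishable from the three distinct non-empty sides, contributing $12\,S(n+1,5)$. For $j=2$ one obtains $3\cdot 1 = 3$ per $S(n+1,4)$, contributing $3\,S(n+1,4)$.

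Summing the three contributions yields the claimed formula $3\,S(n+1,4)+12\,S(n+1,5)+15\,S(n+1,6)$. The main delicate point is the $j=2$ count, where one must check that the adjacency constraint from $(S4)$ combined with the indistinguishability of the two empty placeholders produces exactly one dihedral orbit: the orbits of the multiset $\{A,B,\emptyset,\emptyset\}$ around the $4$-cycle split into an "adjacent" orbit (the two non-empties consecutive) and an "opposite" orbit (they alternate with the empties), and $(S4)$ rules out the latter. The $j=3$ and $j=4$ cases are more routine because the four cyclic entries are pairwise distinct (disjoint non-empties are distinct, and the empty slot is distinguishable from any non-empty set), so the dihedral action is free and each orbit has full size $8$.
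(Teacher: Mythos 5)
Your proof is correct and takes essentially the same approach as the paper: the same case split by the number of empty sides (two adjacent, exactly one, none), the same reduction to partitions of $[n+1]$ counted by $S(n+1,4)$, $S(n+1,5)$, $S(n+1,6)$, and the same per-partition counts $3$, $12$, $15$. Your explicit quotient by the dihedral group of order $8$, including the check that condition $(S4)$ leaves only the adjacent orbit in the two-empty-sides case, is simply a more formal rendering of the paper's informal count of \emph{non-equivalent} assignments of the blocks to $M$ and the sides.
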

\begin{proof}
	We split up the side-midpoint tuples in $\left[n\right]$ into three cases depending on how many sides are empty.
	Since at most one of two opposite sides can be empty these cover all side-midpoint tuples.
	\begin{description}
		\item[Case 1: Two adjacent sides are empty.] 
		Say $S_1=S_2=\emptyset$.
		In this case, we need to count partitions of a subset of $\left[n\right]$ into three blocks, one for each of the sets $S_3,S_4$ and $M$.
		The sets $S_3$ and $S_4$ are symmetric and we can choose any of the three blocks for the distinguished set $M$.
		Therefore, we obtain $3S(n+1,4)$ side-midpoint tuples in this case.
		\item[Case 2: Exactly one side is empty.] 
		Say $S_1=\emptyset$.
		In this case, we need to count partitions of a subset of $\left[n\right]$ into four blocks, one for each of the sets $S_2,S_3,S_4$ and $M$.
		There are $S(n+1,5) $ such partitions.
		We can choose any of the four blocks as the distinguished midpoint $M$.
		The remaining three blocks can be assigned to the sets $S_2,S_3,S_4$ in exactly three non-equivalent ways.
		These choices correspond to the identity permutations and the two transposition $(1\; 2)$ and $(2\;3)$ in $\mathfrak{S}_3$
		Therefore there are in total $12S(n+1,5)$ side-midpoint tuples in this case.
		\item[Case 3: All sides are non-empty.]
		This case works almost analogously to Case 2.
		This time we need to count partitions of a subset of $\left[n\right]$ into five blocks, one for each of the sets $S_1,S_2,S_3,S_4$ and $M$.
		There are $S(n+1,6) $ such partitions.
		We can choose any of the blocks as the midpoint.
		Subsequently, we can  fix $S_1$ as the first free block without any choices due to the symmetry of the sets $S_1,\dots,S_4$.
		As in Case 2 there are now three choices for the assignment of the last three sets.
		In total we obtain $15S(n+1,6)$ side-midpoint tuples without any empty sides. \qedhere
	\end{description}
\end{proof}

Putting the above statements together we can prove the announced formula for $b_3(\A_n)$:
\begin{proof}[Proof of~\Cref{thm:betti} $(ii)$]
	By \Cref{thm:betti_bc}, the Betti number $b_3(\A_n)$ equals the number of intersecting families of cardinality three minus the number of broken circuits of cardinality three.
	Hence, we can compute $b_3(\A_n)$ using~\Cref{eq:intersection_family_stirling} in~\Cref{thm:intersecting_families} subtracted by the number of tetrahedron and rectangle circuits computed in \Cref{prop:tetrahedra} and \Cref{theo:rectangle_circuits}.
	Thus, we obtain
	\[
		b_3(\A_n) = 9S(n+1,4)+80S(n+1,5) + 345 S(n+1,6) + 840 S(n+1,7) +840S(n+1,8).
	\]
	Expanding this equation via the formula for the Stirling numbers in~\Cref{eq:stirling} yields
	\[
	b_3(\A_n) = \frac{1}{4!} (4\cdot8^n -15\cdot 6^n +15\cdot 5^n - 14 \cdot 4^n + 18 \cdot 3^n - 7\cdot 2^n-1).\qedhere
	\]
%	Therefore, the number of broken circuits of cardinality three in $\A_n$ can be computed using~\Cref{eq:stirling} as
%	\begin{align*}
%		&4S(n+1,4)+12S(n+1,5) + 15S(n+1,6) \\
%		= & \frac{4}{4!} \sum_{i=0}^4(-1)^i\binom{4}{i}(4-i)^{n+1} +  \frac{12}{5!} \sum_{i=0}^5(-1)^i\binom{5}{i}(5-i)^{n+1} + \frac{15}{6!} \sum_{i=0}^6(-1)^i\binom{6}{i}(6-i)^{n+1}\\
%		=&\frac{1}{6}(4^{n+1}-4\cdot 3^{n+1}+6\cdot 2^{n+1}-4)
%		+\frac{1}{10}(5^{n+1} -5\cdot 4^{n+1}+10\cdot 3^{n+1}-10\cdot 2^{n+1}+5)\\
%		&+ \frac{1}{48}(6^{n+1}-6\cdot 5^{n+1}+15\cdot 4^{n+1}-20\cdot 3^{n+1}+15\cdot 2^{n+1}-6)\\
%		=&   \frac{1}{48}6^{n+1} -\frac{1}{40}5^{n+1}-\frac{1}{48}4^{n+1}-\frac{1}{12} 3^{n+1}+\frac{5}{16}2^{n+1}-\frac{7}{24}\\
%		=& \frac{1}{8}6^{n} -\frac{1}{8}5^{n}-\frac{1}{12}4^{n}-\frac{1}{4} 3^{n}+\frac{5}{8}2^{n} -\frac{7}{24}.
%	\end{align*}
%	Thus, we obtain in total
%	\begin{align*}
%		b_3(\A_n) =& \frac{1}{3!}(8^n - 3\cdot6^n + 3\cdot 5^n - 4\cdot  4^n + 3 \cdot 3^n + 2\cdot 2^n - 2) \\
%		&-\left(\frac{1}{8}6^{n} -\frac{1}{8}5^{n}-\frac{1}{12}4^{n}-\frac{1}{4} 3^{n}+\frac{5}{8}2^{n} -\frac{7}{24}\right)\\
%		=& \frac{1}{4!} (4\cdot8^n -15\cdot 6^n +15\cdot 5^n - 14 \cdot 4^n + 18 \cdot 3^n - 7\cdot 2^n-1).\qedhere
%	\end{align*}
\end{proof}

\bibliographystyle{myalpha}
\bibliography{matroid.bib}

\end{document}

%%% Local Variables: 
%%% mode: latex
%%% TeX-master: t
%%% End: 